\newcommand{\nc}{\newcommand}
\nc{\dmo}{\DeclareMathOperator}
\nc{\rnc}{\renewcommand}
\nc{\nt}{\newtheorem}
\newtheorem{theorem}{Theorem}[section]
\newtheorem{lemma}[theorem]{Lemma}
\nc{\B}{B}
\nc{\inv}{^{-1}}
\nc{\C}{\mathbb C}
\nc{\R}{\mathbb R}
\nc{\Z}{\mathbb Z}
\nc{\s}{\sigma}
\DeclareMathOperator{\Aut}{Aut}
\DeclareMathOperator{\GL}{GL}
\DeclareMathOperator{\Sp}{Sp}
\DeclareMathOperator{\CRS}{CRS}
\DeclareMathOperator{\Mod}{Mod}
\dmo{\Stab}{Stab}
\nc{\normalclosure}[1]{\left\langle\hspace{-.7mm}\left\langle{#1}\right\rangle\hspace{-.7mm}\right\rangle}
\nc{\p}[1]{\bigskip\noindent\emph{#1.}}
\begin{document}

\title{Totally Symmetric Sets}


\author{Noah Caplinger}
\address{Noah Caplinger \\ Department of Mathematics \\  University of Chicago \\ 5734 University Ave.\\Chicago, IL 60637}
\thanks{}
\email{nccaplinger@gmail.com}

\author{Dan Margalit}
\address{Dan Margalit \\ Department of Mathematics\\ Vanderbilt University \\ 1326 Stevenson Center Ln \\ Nashville, TN 37240}
\email{dan.margalit@vanderbilt.edu}
\thanks{This material is based upon work supported by the National Science Foundation under Grant No.\ DMS-2203431.}

\subjclass[2020]{Primary 20E34, 20F65, 20F36, 20B05}

\date{\today}

\begin{abstract}
We survey the theory of totally symmetric sets, with applications to homomorphisms of symmetric groups, braid groups, linear groups, and mapping class groups.  
\end{abstract}

\dedicatory{Dedicated to Mike Mihalik on the occasion of his retirement}

\maketitle

\section{Introduction}

The theory of totally symmetric sets is a tool that has been proven to be useful in classifying homomorphisms between certain types of groups.  The basic definitions were introduced by Kordek and the second author in their study of homomorphisms between braid groups \cite{Kordek-Margalit}.  Since that work, the theory has been used in the study of homomorphisms between symmetric groups, braid groups, linear groups, and mapping class groups.

Here is the definition.  A totally symmetric set in a group $G$ is a subset
\[
X = \{x_1,\ldots,x_k\} \subset G
\]
with the following property: for every $\sigma \in \Sigma_k$, there is a $g_\sigma \in G$ such that
\[
g_\sigma x_i g_\sigma^{-1} = x_{\sigma(i)}
\]
for all $i$.  It follows from the definition that the elements of a totally symmetric set lie in a single conjugacy class.

Evidently, if $X \subseteq G$ is a totally symmetric set and $f : G \to H$ is a homomorphism, then $f(X)$ is a totally symmetric set in $H$.  As we show in Section~\ref{sec:tss}, a much stronger condition is true: $f(X)$ is either a totally symmetric set of size $|X|$ or it is a singleton.  In the phrasing of Salter and the first author: collision implies collapse.  This is the fundamental property of totally symmetric sets.

For groups $G$ and $H$, the collision-implies-collapse property yields an (unreasonably effective) blueprint for classifying homomorphisms $f : G \to H$, as follows:
\smallskip
\begin{quote}
\begin{itemize}
    \item[\emph{Step 1.}] Find a large totally symmetric set $X \subset G$.
    \item[\emph{Step 2.}] Classify the large totally symmetric sets in $H$.
    \item[\emph{Step 3.}] Deduce properties of $f(X)$ and draw conclusions about $f$.
\end{itemize}
\end{quote}
\smallskip
For instance, if $G$ has a totally symmetric set $X$ with $|X| = k$, and $H$ has no totally symmetric set of cardinality $k$, then any homomorphism $f: G \to H$ must collapse $X$.  Moreover, for any $x_i,x_j \in X$ the normal closure of $x_ix_j^{-1}$ lies in the kernel of $f$.  In particular, if the $x_ix_j^{-1}$ are normal generators for $G$, then $f$ is the trivial map.  

Step 2 of the blueprint is generally the most challenging (and interesting).  For a given group $H$, the approach is to choose a space $Y$ on which $H$ acts.  As explained in Section~\ref{sec:tsc}, totally symmetric sets in $H$ correspond to totally symmetric geometric configurations in $Y$.  These configurations can be, for example, configurations of eigenspaces for linear maps or canonical reduction systems for mapping classes.  The main strategy is to classify the totally symmetric configurations, and then use this to classify the totally symmetric sets.

\p{Overview} In Section~\ref{sec:tssbp} we introduce the basic notions and examples in the theory of totally symmetric sets.  We also prove the collision-implies-collapse property.  We then use the blueprint to give an intuitive explanation for why the outer automorphism group of the symmetric group is (usually) trivial.

In Sections~\ref{sec:gl} and~\ref{sec:braid} we explain how the blueprint is applied in the cases of the general linear group and the braid group.  The former case was addressed in a paper by the first author and Salter \cite{Caplinger-Salter} and the latter in a paper by the second author and Kordek \cite{Kordek-Margalit}.  As per the blueprint, the strategy in both cases is to classify large totally symmetric configurations and then to promote this to a classification of large totally symmetric sets.  For the general linear group, the configurations are configurations of subspaces.  We then use the theory of Jordan normal form to do the promotion.  For the braid group the configurations are configurations of multicurves.  In this case we use Nielsen--Thurston theory to do the analogous promotion.  

In Section~\ref{sec:kolay} we prove a theorem of Kolay, which says that the standard map $\B_n \to \Sigma_n$ gives the smallest non-cyclic quotient of the braid group.  To streamline Kolay's argument, we first introduce a variation on totally symmetric sets, namely, collapsing sets.  These are exactly the sets that satisfy the collision-implies-collapse property.  We then present Kolay's proof of the theorem.  

Finally, in Section~\ref{sec:spec} we make an explicit analogy between the collision-implies-collapse property and Schur's lemma from representation theory.  We discuss several questions that arise from this analogy.

\p{Acknowledgments} We are grateful to Michael Griffin, Koichi Oyakawa,  Larry Rolen, Shuxian Song, Isabelle Steinmann, and an anonymous referee for helpful comments and conversations.  We would also like to thank Lei Chen, Kevin Kordek, Justin Lanier, Dan Minahan, and Nick Salter for enlightening conversations about totally symmetric sets.


\section{Totally symmetric sets and the blueprint}
\label{sec:tssbp}

The three subsections in this section correspond to the three steps of the blueprint for totally symmetric sets.  In Section~\ref{sec:tss} we give some basic examples of totally sets, and state and prove the collision-implies-collapse lemma.  In Section~\ref{sec:tsc}, we define totally symmetric configurations, and use them to give upper bounds on the sizes of totally symmetric sets in certain groups.  Finally, in Section~\ref{sec:sn} we use the results of Sections~\ref{sec:tss} and~\ref{sec:tsc} to give a conceptual explanation for the classification of automorphisms of the symmetric group $\Sigma_n$.  

\subsection{Totally symmetric sets and collision implies collapse}
\label{sec:tss}

\p{Examples of totally symmetric sets} Among the most basic examples of totally symmetric sets are:
\begin{align*}
    \{(1 \ 2),  (3 \ 4), \ldots\} &\subseteq \Sigma_n,  \ \  \{(1 \ i), \dots, (1 \ n)\} \subseteq \Sigma_n,  \ \ 
    \{ E_{1,2}, \dots, E_{1,n} \} \subseteq \GL_n(\Z),   \\
    &\{ E_1, \dots, E_n \} \subseteq \GL_n(\Z),\ \text{ and} \ 
    \{\sigma_1, \sigma_3,\ldots\} \subseteq \B_n.  
\end{align*}
where the $(i \ j)$ are transpositions in the symmetric group $\Sigma_n$, the $\sigma_i$ are the standard half-twist generators for the braid group $\B_n$, the $E_{i,j}$ are elementary matrices in the general linear group $\GL_n(\Z)$, and the $E_i$ are the elements of $\GL_n(\Z)$ obtained from the identity by negating the $i$th diagonal entry.  We leave it as an exercise to verify that these are all totally symmetric sets.   

\p{Collision implies collapse} The following lemma, mentioned in the introduction, encapsulates the fundamental property of totally symmetric sets.  The lemma originally appears in the work of Kordek and the second author of this paper \cite[Lemma 2.1]{Kordek-Margalit}.

\begin{lemma}
\label{cic}
Let $f : G \to H$ be a homomorphism of groups.  If $X \subseteq G$ is a totally symmetric set then $f(X)$ is either a totally symmetric set of cardinality $|X|$ or a singleton.  \end{lemma}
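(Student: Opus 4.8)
The plan is to analyze what happens to the conjugating elements $g_\sigma$ under $f$. Write $X = \{x_1, \ldots, x_k\}$ and suppose $f(X)$ is \emph{not} a singleton; the goal is to show $|f(X)| = k$ and that $f(X)$ is totally symmetric. The key observation is that for any $\sigma \in \Sigma_k$, applying $f$ to the relation $g_\sigma x_i g_\sigma^{-1} = x_{\sigma(i)}$ gives $f(g_\sigma) f(x_i) f(g_\sigma)^{-1} = f(x_{\sigma(i)})$. So the element $h_\sigma := f(g_\sigma) \in H$ conjugates $f(x_i)$ to $f(x_{\sigma(i)})$ for all $i$. This already shows that \emph{if} the $f(x_i)$ are pairwise distinct, then $f(X)$ is a totally symmetric set of cardinality $k$ with conjugators $h_\sigma$. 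The entire content of the lemma is therefore the dichotomy: either all the $f(x_i)$ coincide, or they are pairwise distinct.

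The heart of the argument is to prove this dichotomy. Suppose $f(x_a) = f(x_b)$ for some $a \neq b$; I claim then that all $f(x_i)$ are equal. Fix any pair of indices $i \neq j$. The point is that the symmetric group acts highly transitively: there is a permutation $\sigma \in \Sigma_k$ with $\sigma(a) = i$ and $\sigma(b) = j$ (since $k \geq 2$ and $a \neq b$, $i \neq j$, such a $\sigma$ exists — just map the two-element set $\{a,b\}$ to the two-element set $\{i,j\}$ appropriately and extend arbitrarily). Conjugating the equation $f(x_a) = f(x_b)$ by $h_\sigma = f(g_\sigma)$ and using $h_\sigma f(x_a) h_\sigma^{-1} = f(x_{\sigma(a)}) = f(x_i)$ and likewise $h_\sigma f(x_b) h_\sigma^{-1} = f(x_j)$, we conclude $f(x_i) = f(x_j)$. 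Since $i,j$ were arbitrary, all elements of $f(X)$ are equal, i.e.\ $f(X)$ is a singleton — contrary to assumption. Hence the $f(x_i)$ are pairwise distinct and we are in the first case of the dichotomy.

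I don't expect any serious obstacle here; the only thing to be careful about is the bookkeeping with the conjugators and making sure the transitivity statement is invoked correctly (mapping an ordered pair of distinct indices to another ordered pair of distinct indices). One should also dispatch the trivial edge cases $k = 0$ and $k = 1$ at the outset, where the statement holds vacuously. A cosmetic alternative to the above is to phrase the dichotomy via the equivalence relation on $\{1,\ldots,k\}$ defined by $i \sim j$ iff $f(x_i) = f(x_j)$: the conjugators $h_\sigma$ show this relation is $\Sigma_k$-invariant, and the only $\Sigma_k$-invariant equivalence relations on a $k$-element set are equality and the total relation, giving exactly the two cases. I would present the direct version above, as it is self-contained and avoids invoking the classification of invariant equivalence relations.
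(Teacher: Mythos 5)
Your proof is correct and uses essentially the same mechanism as the paper's: propagate a collision $f(x_a)=f(x_b)$ to all other pairs by conjugating by $f(g_\sigma)$ for a suitable $\sigma$. The only cosmetic difference is that the paper fixes one of the two colliding elements and swaps the other with an arbitrary third, whereas you invoke $2$-transitivity of $\Sigma_k$ to move the colliding ordered pair to an arbitrary ordered pair; both land on the same dichotomy.
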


\begin{proof}

Let $x,y,z \in X$ and suppose $f(x) = f(y)$. Total symmetry guarantees some $g \in G$ such that $g (x,y,z) g^{-1} = (x,z,y)$.  We have:
\[
f(xz\inv) = f(g x y\inv g\inv) = f(g)f(x y\inv)f(g)^{-1} = 1.
\]
Thus $f(z) = f(x)$ and the lemma follows. 
\end{proof}

In the original paper by Kordek and the second author, totally symmetric sets were assumed to have the additional property that the elements commute pairwise.  So in that paper, the set $\{(1 \ 2), \; (3 \ 4), \ldots\}$ would be considered as a totally symmetric set in $\Sigma_n$, whereas $\{(1 \ i), \dots, (1 \ n)\}$ would not.  The commutativity condition was included because it simplifies the classification of totally symmetric configurations for braid groups.  Since the more general totally symmetric sets considered here still satisfy Lemma~\ref{cic}, we will henceforth use the term ``commuting totally symmetric set'' to refer to a totally symmetric set with the additional property that the elements commute pairwise.  

In defining totally symmetric sets, Kordek and the second author were directly inspired by the work of Aramayona--Souto, who used a symmetric group action on a collection of Dehn twists to similar effect \cite[Section 5]{AS}.  


\subsection{Totally symmetric configurations and upper bounds on totally symmetric sets}
\label{sec:tsc}

Recall that Step 2 of the blueprint concerns the classification of large totally symmetric sets in a given group $G$ (in the blueprint the group is called $H$).  After the fashion in geometric group theory, we approach this problem by considering a suitable action of $G$ on a space $Y$.  Given such an action, we often have at least one natural choice of function:
\begin{align*}
G &\to \text{subsets of } Y \\
g &\mapsto Y_g
\end{align*}
For a given $g \in G$, the subset $Y_g$ might be the fixed set, or an eigenspace, or an invariant axis, etc.  As long as the association $g \mapsto Y_g$ is natural, it will satisfy the equation
\[
Y_{hgh^{-1}} = h \cdot Y_g
\]
for all $h \in G$ (we can take this equivariance condition as the definition of naturality).  In particular, if $\{x_1,\dots,x_k\}$ is a totally symmetric subset of $G$, then $\{Y_{x_1},\dots,Y_{x_k}\}$ is a totally symmetric configuration in $Y$ in the sense that any permutation of the $Y_{x_i}$ can be realized by the action of $G$.

\p{Totally symmetric configurations} Motivated by this discussion, we can give the definition of a totally symmetric configuration.  
Suppose that a group $G$ acts on a space $Y$.  Let
\[
\{Y_1, \dots, Y_k \}
\]
be a collection of subspaces of $Y$.  We say that $\{Y_i\}$ is a totally symmetric configuration if for each $\sigma \in \Sigma_k$, there is a $g_\sigma \in G$ so that
\[
g_\sigma \cdot (Y_i) = Y_{\sigma(i)}
\]
for all $i$.  Again, the point of the definition is that, as long as the association of a subspace to a group element satisfies the naturality property $Y_{hgh^{-1}} = h \cdot Y_g$, the configuration associated to a totally symmetric set is a totally symmetric configuration.


\p{Unifying the definitions} Our definitions of totally symmetric sets and totally symmetric configurations are almost identical.  As observed by the first author and Salter \cite[Definition 2.1]{Caplinger-Salter}, they can be combined into one definition as follows: 
\begin{quote}
\emph{
Let $G$ act on a set $Z$. A subset $X = \{x_1,\ldots,x_k\}\subset Z$ is totally symmetric if for all $\sigma \in \Sigma_k$, there is some $g_\sigma \in G$ such that $g_\sigma \cdot x_i = x_{\sigma(i)}$
}
\end{quote}
The definition of a totally symmetric set is recovered by considering the action of $G$ on itself by conjugation, and the definition of a totally symmetric configuration is recovered by considering the action of $G$ on a set of subsets of a space $Y$ that carries an action of $G$.  Even in this general setting, totally symmetric sets obey the collision-implies-collapse principle where the homomorphism in Lemma~\ref{cic} is replaced by a $G$-equivariant map.

\p{Example: Dihedral groups} We will use the notion of totally symmetric configurations to prove the following fact:
\begin{quote}
\emph{A totally symmetric set $X \subseteq D_{n}$ has $|X| \leq 3$.}
\end{quote}
The first step is to prove that a totally symmetric set of rotations has cardinality at most two (exercise).  Suppose then that $X$ is a totally symmetric set consisting of reflections. To each reflection in $X$ we can associate the corresponding line of reflection in the plane.  As above, this gives a totally symmetric configuration of lines in the plane.  The largest such configuration has three lines (another exercise).  Since reflections are determined by the corresponding lines, the desired statement follows.

This argument shows more: 
\begin{quote}
\emph{If $X$ is a totally symmetric set of $D_n$ with $|X|=3$ then $3 \mid n$ and $X$ consists of reflections about lines that pairwise form an angle of $\pi/3$.}
\end{quote}
As a sample consequence, we have the following fact:
\begin{quote}
    \emph{If $n \geq 8$ and $m \geq 3$, then every homomorphism $\B_n \to D_m$ has cyclic image.}
\end{quote}
While this fact is not difficult to prove directly, the theory of totally symmetric sets gives a natural, conceptual explanation.  

\p{Example: Free groups} We now use totally symmetric configurations to prove the following:
\begin{quote}
\emph{A totally symmetric set in $X \subseteq F_2$ has $|X| \leq  1$.}
\end{quote}

Consider the action of $F_2$ on its Cayley graph, the regular four-valent tree $T_4$.  Each element $x \in F_2$ acts on $T_4$ by translating along an axis, which is a bi-infinite geodesic in $T_4$.  Again, this gives a totally symmetric configuration of geodesics.  

Within a given conjugacy class in $F_2$, an element is determined by its axis.  Therefore, it suffices to show that there is no totally symmetric configuration consisting of two bi-infinite geodesics in $T_4$.  

Let $Y \subset T_4$ be a totally symmetric configuration of bi-infinite geodesics.  If $Y_1$ and $Y_2$ are distinct elements of $Y$, then by total symmetry there is an element of $F_2$ interchanging $Y_1$ and $Y_2$.  This is impossible, since (using the usual embedding of $T_4$ in the plane)  the elements of $F_2$ act  on $T_4$ by orientation-preserving planar automorphisms.  

\p{An upper bound for all groups} The first author proved the following result \cite[Theorem 1]{CaplingerSn}, which gives an upper bound for the cardinality of a totally symmetric set in an arbitrary group.  

\begin{theorem}[Caplinger]
\label{thm:bound}
Let $X$ be a totally symmetric set in a group $G$.  If $|X| \geq 4$, then 
\[
|G| \geq (|X|+1)!
\]
Equality is attained only when $G = \Sigma_n$.
\end{theorem}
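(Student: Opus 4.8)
The plan is to recast the theorem as a statement about the conjugation action of $G$ on the subset $X$, and then to run a short case analysis governed by the structure of that action. Since the inequality is vacuous when $G$ is infinite, and since $X$ remains totally symmetric inside $\langle X, g_\sigma : \sigma\in\Sigma_k\rangle$, we may assume $G$ is finite and generated by $X$ together with the $g_\sigma$. Write $X=\{x_1,\dots,x_k\}$, let $S=\{g\in G : gXg\inv=X\}$ be the setwise stabilizer of $X$ under conjugation, and let $A=\bigcap_i C_G(x_i)=C_G(\langle X\rangle)$ be the pointwise stabilizer. Then $A\trianglelefteq S$, and recording the induced permutation of $X$ defines a homomorphism $S\to\Sigma_k$ with kernel exactly $A$; by total symmetry the $g_\sigma$ all lie in $S$ and realize every permutation of $X$, so this homomorphism is onto. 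Hence $[S:A]=k!$, and
\[
|G| \;=\; [G:S]\cdot k!\cdot|A|.
\]
Thus the theorem reduces to proving $[G:S]\cdot|A|\ge k+1$, with equality forcing $G\cong\Sigma_{k+1}$. Since $[G:S]$ is the number of distinct conjugates of the set $X$, what must be shown is that (number of conjugates of $X$)$\,\times\,|C_G(\langle X\rangle)|\ge k+1$.

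Next I would split on whether $X$ is a single conjugacy class of $G$, that is, on whether $S=G$. If it is, then $[G:S]=1$ and $G/A\cong\Sigma_k$, and I must show $|A|\ge k+1$. When $\langle X\rangle$ acts trivially on $X$ by conjugation this is immediate: then $X\subseteq A$, so $\{e\}\cup X$ supplies $k+1$ distinct elements of $A$. When $\langle X\rangle$ acts nontrivially — so the images $\bar x_i$ of the $x_i$ in $\Sigma_k=\operatorname{Sym}(X)$ are pairwise conjugate, with each $\bar x_i$ fixing $x_i$ — one must exploit the rigidity of this configuration, together with $k\ge 4$, to force $|A|\ge k+1$ again. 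If instead $S\ne G$, then $[G:S]\ge 2$ and I would manufacture further conjugates of $X$ by conjugating by words in the $x_i$: if some $x_i$ fails to normalize $X$, one shows these conjugates spread over at least $k+1$ cosets of $S$; and if every $x_i$ normalizes $X$, then $\langle X\rangle\le S$ is a proper subgroup and one is again reduced to a centralizer estimate. In each case $[G:S]\cdot|A|\ge k+1$, so $|G|\ge(k+1)!$.

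For the equality clause, equality means $[G:S]\cdot|A|=k+1$, and the analysis above should show this is incompatible with $\langle X\rangle$ acting nontrivially on $X$; while if $\langle X\rangle$ acts trivially then $A=\{e\}\cup X$ must be a group of order $k+1$ whose nonidentity elements form a single $G$-conjugacy class, with $\Sigma_k$ acting on $A$ through $\operatorname{Aut}(A)$ as the full symmetric group on those elements — impossible for $k\ge 4$ by comparing $(k+1)!$ with $|\operatorname{Aut}(A)|$ (for instance with $|\GL_r(\mathbb F_p)|$ when $A$ is elementary abelian). Hence equality forces $|A|=1$ and $[G:S]=k+1$, so $S\cong\Sigma_k$ has index $k+1$ in $G$. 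One then checks that the action of $G$ on the $k+1$ conjugates of $X$ is faithful — ruling out $S\trianglelefteq G$, which would force $G\cong\Sigma_k\times(\text{a group of order }k+1)$ and hence a totally symmetric set of size $k$ inside $\Sigma_k$, contradicting the inequality just proved — so $G$ embeds in $\Sigma_{k+1}$ and, by order, $G\cong\Sigma_{k+1}$.

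I expect the main obstacle to be the nontrivial-action case: bounding $|C_G(\langle X\rangle)|$ from below, or producing $k+1$ conjugates of $X$, when $\langle X\rangle$ does not act on $X$ through the identity. This is precisely where the hypothesis $k\ge 4$ enters; for $k=3$ the bound already fails, since $\Sigma_3\cong D_3$ contains the totally symmetric set of its three transpositions yet has order only $6$, not $4!$.
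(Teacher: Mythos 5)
The survey does not itself prove Theorem~\ref{thm:bound}; it is quoted from \cite[Theorem~1]{CaplingerSn}, so there is no in-paper proof to compare against, and the assessment below is of your argument on its own terms. Your opening reduction is correct and natural: with $S$ the setwise stabilizer and $A$ the pointwise stabilizer of $X$ under conjugation, total symmetry does give a surjection $S\to\Sigma_k$ with kernel $A$, hence $S/A\cong\Sigma_k$ and $|G|=[G:S]\cdot k!\cdot|A|$, so the inequality is equivalent to $[G:S]\cdot|A|\ge k+1$. The subcase $S=G$ with $\langle X\rangle$ acting trivially is handled cleanly via $\{e\}\cup X\subseteq A$.

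But the proposal stops exactly where the theorem has content. When $S=G$ and $\langle X\rangle$ acts nontrivially, you say one must ``exploit the rigidity of this configuration'' to force $|A|\ge k+1$; when $S\ne G$, you assert that the conjugates of $X$ ``spread over at least $k+1$ cosets of $S$'' or that one is ``reduced to a centralizer estimate.'' These are statements of intent, not arguments, and they cover the cases that matter: the model example $X=\{(1\,i)\}\subset\Sigma_{k+1}$ already sits in the $S\ne G$ branch with $[G:S]=k+1$ and $|A|=1$, so a priori both factors can be small and the product bound genuinely needs work. You concede as much at the end. The equality clause has the same gap and an additional logical slip: from $[G:S]\cdot|A|=k+1$ you jump to $|A|=1$ and $[G:S]=k+1$, but nothing you prove rules out intermediate factorizations, say $|A|=p$ and $[G:S]=(k+1)/p$ for a prime $p$ dividing $k+1$. (A smaller point: in the trivial-action equality subcase the correct comparison is $k!\le|\Aut(A)|$, since it is $G/A\cong\Sigma_k$ that embeds in $\Aut(A)$, not $(k+1)!$ --- though the conclusion survives, because $\Aut(A)\cong\GL_r(\mathbb{F}_p)$ is a proper subgroup of $\Sigma_{p^r-1}$ once $p^r\ge 5$.) The $S$--$A$ decomposition is a reasonable frame, in the same orbit-stabilizer spirit as Kolay's arguments elsewhere in the paper, but as written this is a plan with the two hard estimates and the equality analysis left open.
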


As a sample application, any totally symmetric set in the monster group $M$ has cardinality less than 44, since $44! > |M|$.  

Analogous (but not sharp) upper bounds on the cardinalities of commuting totally symmetric sets were proved by Chudnovsky--Kordek--Li--Partin \cite[Proposition 2.2]{reu} and by Scherich--Verberne \cite[Theorem A]{sv}.  

\p{Other upper bounds on commuting totally symmetric sets} Kordek--Li--Partin \cite{UpperBoundsTSS} provide a suite of upper bounds for the cardinality of a commutative totally symmetric set.  For instance they show that the largest cardinality of a commutative totally symmetric set in the dihedral group is 2 \cite[Theorem 3.3]{UpperBoundsTSS} and that the largest cardinality of a commutative totally symmetric set in the Baumslag--Solitar group $BS(1,n)$ with $n \neq -1$ is 1. They also prove that the largest cardinality of a commutative totally symmetric set in a product $G \times H$ or $G \ast H$ is the supremum of the cardinalities for totally symmetric sets in a single factor.  They also prove that a solvable group cannot have a commutative totally symmetric set with 5 elements.  We refer the reader to their paper for a full accounting of their results.


\subsection{Application to the symmetric group} 
\label{sec:sn}

Using the basic theory of totally symmetric sets already established, we can give a conceptual explanation of the following classical theorem.  This argument originally appeared in the work of the first  author \cite{CaplingerSn}.

\begin{theorem}
\label{thm:autsn}
For $n \geq 7$, the outer automorphism group of $\Sigma_n$ is trivial.
\end{theorem}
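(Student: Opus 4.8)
The goal is to show that any automorphism $\phi \colon \Sigma_n \to \Sigma_n$ is inner, and the key leverage comes from the fact that the automorphism is injective, so it cannot collapse a totally symmetric set. First I would fix the totally symmetric set $X = \{(1\ 2), (1\ 3), \ldots, (1\ n)\} \subseteq \Sigma_n$, which has cardinality $n-1$. Since $\phi$ is injective, Lemma~\ref{cic} forces $\phi(X)$ to be a totally symmetric set of the same cardinality $n-1$. The heart of the argument is then Step~2: to classify the totally symmetric sets of cardinality $n-1$ in $\Sigma_n$ (for $n$ large), and to show they all look like $X$ up to conjugacy — in particular, that they consist of transpositions.

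The main obstacle is precisely this classification step. I expect the argument to go as follows. One should first understand which conjugacy classes in $\Sigma_n$ can even support a totally symmetric set of size close to $n$. A clean way to organize this is via Theorem~\ref{thm:bound}: a totally symmetric set of size $n-1 \geq 4$ in $\Sigma_n$ would, if $\Sigma_n$ were too small, be impossible — but here $|\Sigma_n| = n!$ and $((n-1)+1)! = n!$, so we are exactly at the equality case of Theorem~\ref{thm:bound}. The theorem says equality is attained only when the ambient group is a symmetric group, which is consistent but not immediately conclusive; however, the structure of the equality case should pin down that the totally symmetric set must be (conjugate to) the set of transpositions $\{(1\ i)\}$, since that is the totally symmetric set realizing equality. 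Alternatively, and perhaps more self-containedly, one argues directly: the permutations $g_\sigma$ realizing the symmetry generate a subgroup of $\Sigma_n$ that acts as $\Sigma_{n-1}$ on the set $\phi(X)$, and a transitive action of a group with an $(n-1)$-element invariant totally symmetric structure forces $\phi(X)$ to be $\Sigma_n$-conjugate to $X$.

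Once we know $\phi(X)$ is conjugate to $X = \{(1\ 2), \ldots, (1\ n)\}$, we compose $\phi$ with an inner automorphism to assume $\phi(X) = X$ as a set. Now $\phi$ permutes the transpositions $(1\ i)$ among themselves; say $\phi((1\ i)) = (1\ \tau(i))$ for some bijection $\tau$ of $\{2,\ldots,n\}$. Let $\psi$ be the inner automorphism given by conjugation by the permutation extending $\tau$ (fixing $1$); then $\psi^{-1}\phi$ fixes every element of $X$ pointwise. Since the transpositions $(1\ 2), \ldots, (1\ n)$ generate $\Sigma_n$, an automorphism fixing all of them is the identity, so $\phi = \psi$ is inner. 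This shows $\operatorname{Out}(\Sigma_n) = 1$. The constraint $n \geq 7$ (rather than a smaller bound) enters because we need $n - 1 \geq 4$ to invoke Theorem~\ref{thm:bound} cleanly and, more importantly, because $\Sigma_6$ genuinely has an outer automorphism — so the classification of size-$(n-1)$ totally symmetric sets must actually fail at $n = 6$, where the exotic automorphism sends transpositions to products of three disjoint transpositions, which for $n=6$ also form a totally symmetric set of the critical size. Checking that no such coincidence survives for $n \geq 7$ is the delicate point, and it is exactly what the equality clause of Theorem~\ref{thm:bound} is designed to handle.
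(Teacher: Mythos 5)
Your overall strategy mirrors the paper exactly: pick the totally symmetric set $Z_n = \{(1\ i)\}$ of size $n-1$, observe that an automorphism cannot collapse it, classify the size-$(n-1)$ totally symmetric sets in $\Sigma_n$, and finish by composing with inner automorphisms and using that $Z_n$ generates $\Sigma_n$. Your endgame is also correct and essentially identical to the paper's.

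The gap is in the classification step, which you rightly identify as the heart of the matter but do not actually establish. The paper does not derive this from Theorem~\ref{thm:bound}; it cites a separate classification theorem (Theorem~2 of the same reference): \emph{for $n \geq 7$, every totally symmetric set in $\Sigma_n$ of cardinality $\geq n-1$ is conjugate to $Z_n$}. Your attempted derivation from Theorem~\ref{thm:bound} cannot work, for a reason your own last paragraph nearly exposes. Theorem~\ref{thm:bound} says only that the equality $|G|=(|X|+1)!$ forces $G$ to be a symmetric group; it says nothing about the conjugacy class of $X$ inside that symmetric group. And indeed $\Sigma_6$ satisfies $|\Sigma_6| = 6! = (5+1)!$ while having \emph{two} non-conjugate totally symmetric sets of size $5$ (the transpositions $\{(1\ i)\}$ and their image under the exotic outer automorphism). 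So the equality case of Theorem~\ref{thm:bound} is realized with two distinct conjugacy classes of totally symmetric sets --- the theorem simply isn't the kind of statement that can ``pin down'' $\phi(X)$. The hypothesis $n \geq 7$ lives in the classification theorem, not in Theorem~\ref{thm:bound}, precisely to exclude this $\Sigma_6$ phenomenon. Your alternative ``self-contained'' argument (that the realizing permutations $g_\sigma$ act as $\Sigma_{n-1}$, forcing $\phi(X)$ to be conjugate to $X$) is stated but not proved: knowing that some subgroup surjects onto $\Sigma_{n-1}$ acting on $\phi(X)$ does not by itself determine the cycle type or configuration of the elements of $\phi(X)$, which is exactly what one must control. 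To complete the proof you would need to actually prove (or cite) the classification of large totally symmetric sets in $\Sigma_n$, rather than hope it falls out of the order bound.
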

Let $Z_n$ denote the totally symmetric set
\[
Z_n = \{(1 \ i) \mid i \geq 2\} \subseteq \Sigma_n. 
\]
To prove Theorem~\ref{thm:autsn} we use the following auxiliary result \cite[Theorem 2]{CaplingerSn}, which is a classification of large totally symmetric sets in $\Sigma_n$:
\begin{quote}
\emph{Let $n \geq 7$.  If $X \subset \Sigma_n$ is a totally symmetric set with $|X| \geq n-1$. Then $X$ is conjugate to $Z_n$. }\end{quote}
From this fact, the proof of Theorem~\ref{thm:autsn} proceeds as follows.  Let $f : \Sigma_n \to \Sigma_n$ be an automorphism. Then $f(Z_n)$ is equivalent to $Z_n$ in the following sense: there exists  $\tau \in \Sigma_n$ with $\tilde{\tau}f(Z_n) = Z_n$, where $\tilde{\tau}$ is the inner automorphism corresponding to $\tau$. Then $\tilde{\tau} \circ f$ permutes $Z_n$, so total symmetry gives some $\sigma \in \Sigma_n$ so that 
\[
\tilde{\sigma}\circ \tilde{\tau}\circ f = \widetilde{\sigma\tau} \circ f
\]
is the identity on $Z_n$.  Since $Z_n$ generates $\Sigma_n$, we conclude that $\widetilde{\sigma\tau} \circ f = \textrm{id}$, that is, $f = (\widetilde{\sigma\tau})^{-1}$.  In particular, $f$ is an inner automorphism, completing the proof of the theorem.

A slight modification of the above argument yields the following (well-known) generalization of Theorem~\ref{thm:autsn}.

\begin{theorem}
Let $7 \leq n \leq m$, and let $f : \Sigma_m \to \Sigma_n$ be a homomorphism whose image is not cyclic.  Then $m=n$ and $f$ is an inner automorphism.
\end{theorem}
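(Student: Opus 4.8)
The plan is to follow the proof of Theorem~\ref{thm:autsn} almost verbatim, using the star transpositions in $\Sigma_m$ in place of $Z_n$ and using collision-implies-collapse to accommodate the fact that $f$ is no longer assumed injective. First I would set
\[
Z_m = \{(1\ i) \mid 2 \leq i \leq m\} \subseteq \Sigma_m,
\]
which, as in the examples above, is a totally symmetric set of cardinality $m-1$, and which generates $\Sigma_m$. By Lemma~\ref{cic}, the image $f(Z_m)$ is either a singleton or a totally symmetric set of cardinality $m-1$ in $\Sigma_n$. In the first case $f(\Sigma_m) = f(\langle Z_m\rangle) = \langle f(Z_m)\rangle$ is cyclic, contrary to hypothesis; so $f(Z_m)$ is a totally symmetric set of size $m-1$ in $\Sigma_n$, and in particular $f$ is injective on $Z_m$.

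Next I would invoke the classification of large totally symmetric sets in $\Sigma_n$ quoted above. Since $n \geq 7$ and $|f(Z_m)| = m-1 \geq n-1$, that result says $f(Z_m)$ is conjugate to $Z_n$. Comparing cardinalities forces $m-1 = n-1$, that is, $m = n$, and it moreover supplies a $\tau \in \Sigma_n$ with $\tilde\tau\, f(Z_n) = Z_n$.

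Finally I would rerun the end of the proof of Theorem~\ref{thm:autsn}. The endomorphism $\tilde\tau \circ f$ of $\Sigma_n$ restricts to a permutation of the set $Z_n$ (here using that $f$ is injective on $Z_n$), so total symmetry of $Z_n$ provides a $\sigma \in \Sigma_n$ for which $\widetilde{\sigma\tau} \circ f$ is the identity on $Z_n$. Since $Z_n$ generates $\Sigma_n$, this forces $\widetilde{\sigma\tau} \circ f = \mathrm{id}$, whence $f = (\widetilde{\sigma\tau})^{-1}$ is an inner automorphism of $\Sigma_n$.

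I do not expect a genuine obstacle here; the only points requiring a little care are that the collapse alternative of Lemma~\ref{cic} is excluded using precisely the non-cyclicity hypothesis (together with the fact that $Z_m$ generates $\Sigma_m$), and that the classification result is strong enough to yield both $m = n$ and the conjugacy $\tilde\tau\, f(Z_n) = Z_n$ at once from the single inequality $m-1 \geq n-1$.
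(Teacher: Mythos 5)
Your proposal is correct and follows the same route the paper sketches: apply collision--implies--collapse to the star-transposition set $Z_m \subset \Sigma_m$, rule out collapse using that $Z_m$ generates $\Sigma_m$ and the image is not cyclic, invoke the classification of large totally symmetric sets in $\Sigma_n$ to force $m-1 = n-1$ (hence $m=n$) and to conjugate $f(Z_n)$ onto $Z_n$, and then finish exactly as in the proof of Theorem~\ref{thm:autsn}. The paper states this in one terse sentence; you have merely unpacked it, including the small but worthwhile observation that $\widetilde{\sigma\tau}\circ f$ being the identity on the generating set $Z_n$ automatically promotes $f$ to a surjection and hence an automorphism, so no separate bijectivity hypothesis is needed.
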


To obtain this stronger theorem, the only additional observation required is that---by the classification of large totally symmetric sets in $\Sigma_n$---the restriction $f|Z_n$ cannot be injective when $n < m$.  Thus it must be trivial, and so the image is cyclic (of order at most 2).  

\medskip

We would be remiss not to describe the situation for $\Sigma_6$, which does have a nontrivial outer automorphism.  From the perspective of totally symmetric sets, the reason why this outer automorphism exists is that $\Sigma_6$ has two conjugacy classes of totally symmetric sets with five elements: the standard one and its image under the nontrivial outer automorphism.  

\medskip

The proof of Theorem~\ref{thm:autsn} given here is not simpler than the classical proof.  However, it gives a conceptually simple, structural explanation.  Also, the classification of large totally symmetric sets creates a broad tool for studying any homomorphism to or from $\Sigma_n$.  We will return to this theme several times in what follows.


\section{Totally symmetric sets in the general linear group}
\label{sec:gl}

In this section we turn our attention to the general linear group.  The first author and Salter give a classification of large totally symmetric sets in $\GL_n(\C)$, Theorem~\ref{thm:linear_tss} below.  They used this classification to give a new, conceptual proof of the following classical fact:
\begin{quote}
\emph{Any non-abelian representation of $\Sigma_n$ has dimension at least $n~-~1$.}
\end{quote}
We will start by describing the largest totally symmetric sets in $\GL_n(\C)$, then state the classification theorem, and then explain the applications to representation theory.  Following the blueprint, we then classify large totally symmetric configurations in $\C^n$, before using this classification to prove the classification of totally symmetric sets in $\GL_n(\C)$.

\p{Standard totally symmetric sets} Consider a regular $n$-simplex $\Delta \subset \R^n$ centered at the origin. The set of vertices $v_1,\ldots,v_{n+1}$ of $\Delta$ is a totally symmetric configuration in the sense that for all $\sigma \in \Sigma_{n+1}$, there is an $A_\sigma$ in $\GL_n(\C)$ (in fact in $O(n)$) such that $A_\sigma v_i = v_{\sigma(i)}$. The hyperplanes $v_i^\bot$ also form a totally symmetric configuration with the same choice of $A_\sigma$. This allows us to form a totally symmetric set in $\GL_n(\C)$ as follows. Pick $\lambda, \mu \in \C$ distinct and non-zero, and define $A_i \in \GL_n(\C)$ by declaring $\C v_i$ and $v_i^\bot$ to be $\lambda$- and $\mu$-eigenspaces respectively. The set $\mathcal{A}_n = \{A_1,\ldots,A_{n+1}\}$ is then totally symmetric.  We refer to (any conjugate of) any such $\mathcal{A}_n$ as a standard totally symmetric set in $\GL_n(\C)$.  

\p{The classification of large totally symmetric sets} The following theorem says that the above construction is the only construction of totally symmetric sets in $\GL_n(\C)$ of cardinality $n+1$.

\begin{theorem}[Caplinger--Salter]
\label{thm:linear_tss}
Let $n\neq 5$ and let $X\subset \GL_n(\C)$ be a totally symmetric set. Then $|X|\leq n+1$, and equality is achieved exactly when $X$ is standard.
\end{theorem}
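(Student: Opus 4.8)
The plan is to follow the blueprint: first classify large totally symmetric \emph{configurations} of subspaces in $\C^n$, then promote this to a classification of totally symmetric sets using Jordan normal form. For the configuration side, suppose $A_1,\dots,A_k \in \GL_n(\C)$ form a totally symmetric set with $k = |X|$ large. Since the $A_i$ are all conjugate, they share a common list of eigenvalues with common multiplicities, and the eigenspace decomposition of each $A_i$ gives, for each eigenvalue $\lambda$, a totally symmetric configuration $\{V_i^\lambda\}$ of subspaces of $\C^n$ of some fixed dimension $d_\lambda$, with $\sum_\lambda d_\lambda \le n$ (less if the $A_i$ are not diagonalizable, in which case we work with generalized eigenspaces instead). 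The first main step is therefore: \emph{bound the size of a totally symmetric configuration of $k$ subspaces of $\C^n$, each of dimension $d$, and identify the extremal configurations.} I would expect the key geometric input to be that, for a configuration to admit the full symmetric group $\Sigma_k$ in its symmetry group, the subspaces must be ``in general position'' in a suitable sense --- e.g. their pairwise intersections all have the same dimension, their pairwise sums all have the same dimension, and so on for all multi-way intersections --- and a dimension count then forces $k \le n/d + 1$ (or something comparable), with equality exactly when the $V_i$ are the ``coordinate-type'' subspaces spanned by faces of a simplex, as in the standard construction.

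Next I would assemble these per-eigenvalue bounds. If there are two distinct eigenvalues with eigenspace dimensions $d_\lambda, d_\mu$, then $k \le \min(n/d_\lambda + 1, n/d_\mu + 1)$, and since $d_\lambda + d_\mu \le n$ the worst case that still allows $k = n+1$ forces $d_\lambda = d_\mu = 1$ and $n+1 = n/1 + 1$, i.e. exactly the standard picture with a $\lambda$-eigenline and a $\mu$-eigenhyperplane for each $A_i$; if only one eigenvalue appears the $A_i$ are scalar and $k=1$; more than two eigenvalues makes the bound strictly worse. This is where the hypothesis $n \ne 5$ must enter: the configuration classification in $\C^n$ presumably has a sporadic extremal example when $n = 5$ (related, no doubt, to the exceptional outer automorphism of $\Sigma_6$ and the exceptional behavior already flagged for $\Sigma_6$ and for totally symmetric sets with five elements), so one must exclude $n=5$ to get the clean statement.

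The final step is the promotion from configurations back to sets. Having shown that the eigenspace data of $\{A_1,\dots,A_{n+1}\}$ matches that of a standard set $\mathcal A_n$ --- a $\lambda$-eigenline $L_i$ and a $\mu$-eigenhyperplane $H_i$, with $\{L_i\}$ and $\{H_i\}$ both equivalent (after a global conjugation) to the simplex configuration --- one notes that a semisimple matrix is determined by its eigenspaces together with the corresponding eigenvalues. So after conjugating so that $L_i = \C v_i$ and $H_i = v_i^\perp$, each $A_i$ is forced to be exactly the standard $A_i$, and hence $X$ is standard. One must also rule out the non-semisimple case: if some $A_i$ had a nontrivial Jordan block, total symmetry would force all of them to have the same Jordan type, and the generalized-eigenspace configuration would have the same combinatorial constraints, making the dimension bound strictly worse than $n+1$ --- so this case cannot achieve equality. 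The main obstacle, as always in this program, is Step 2 --- the configuration classification in $\C^n$ and especially the careful case analysis around small $n$ that isolates the $n=5$ exception; the Jordan-form promotion in Step 3, by contrast, should be essentially formal once the configuration statement is in hand.
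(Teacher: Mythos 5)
Your outline follows the paper's blueprint exactly --- classify totally symmetric configurations of subspaces (the Caplinger--Salter proposition, which the survey also treats as a black box), then promote back to $\GL_n(\C)$ via Jordan theory --- but there is a genuine gap in the promotion step. When you dismiss the non-semisimple case by saying that total symmetry forces a common Jordan type and "the generalized-eigenspace configuration would have the same combinatorial constraints, making the dimension bound strictly worse than $n+1$," you are implicitly assuming these configurations are non-degenerate. Nothing forces that. If the $A_i$ happen to share the same generalized eigenspaces (a common Jordan filtration), then each per-eigenvalue configuration $\{E_{\lambda,j}^i\}_i$ is a singleton and the configuration theorem gives no bound at all. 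This is exactly the hard case, and the paper resolves it by induction on $n$: the $A_i$ restricted to (and induced on quotients by) a common invariant subspace $E_{\lambda,j}$ form a totally symmetric set in some $\GL_d(\C)$ with $d<n$, and induction applies if any of these is non-degenerate; only when all restrictions and quotients collapse too does one need the extra work that the survey leaves to the original paper. Your sketch omits this inductive layer entirely, and without it the upper bound $|X|\le n+1$ is not established.

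There is a second gap in the equality case. You assert that once $\{L_i\}$ is shown to be a simplex configuration of lines and $\{H_i\}$ a dual configuration of hyperplanes, "each $A_i$ is forced to be exactly the standard $A_i$." But you have not shown the two configurations are \emph{aligned}: why must the $\lambda$-eigenspace $\C v_i$ be paired with the $\mu$-eigenspace $v_i^\perp$ for the \emph{same} index $i$, rather than under some mismatched bijection, or with $\{H_i\}$ some other hyperplane arrangement? The paper handles this through a realization-map argument: up to scaling, the only homomorphism $\phi:\Sigma_{n+1}\to\GL_n(\C)$ realizing the symmetry of the simplex configuration is the standard representation; since it is irreducible, no eigenspace arrangement for $X$ can be degenerate, each $A_i$ is shown to have exactly two eigenvalues, both eigenspaces of $A_i$ must be $\phi(\Stab(i))$-invariant, and the only such line and hyperplane are $\C v_i$ and $v_i^\perp$. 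This is not the formality you suggest. (Minor point: your guessed bound $k\le n/d+1$ for $d$-dimensional subspaces is incompatible with the dual configuration of $n+1$ hyperplanes, $d=n-1$; the actual statement is $k\le n+1$ for all $d$, with equality only for $d\in\{1,n-1\}$.)
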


This theorem immediately applies to bound the dimension of a faithful representation of any group: if $G$ contains a totally symmetric subset of size $n$, then $G$ has no faithful representations in dimension less than $n-1$.  In the case of $\Sigma_n$, we can say more.

\p{Application to representations of the symmetric group} Building on the last idea, let $\rho~:~\Sigma_n\to \GL_m(\C)$ be a non-abelian representation of $\Sigma_n$.  We would like to show $m \geq n-1$.  

For the standard totally symmetric set $Z_n$ in $\Sigma_n$ we have that $\rho(Z_n) \subset \GL_m(\C)$ is a totally symmetric set.  We show that if $m < n-1$ then $\rho(Z_n)$ is a singleton, implying that $\rho$ has cyclic image.  We treat the cases $m < n-2$ and $m=n-2$ in turn.

If $m < n-2$ then Theorem~\ref{thm:linear_tss} gives that there is no totally symmetric subset of $\GL_m(\C)$ of size $n-1$.  Thus by Lemma~\ref{cic}, $\rho(Z_n)$ is a singleton, as desired.  

If $m=n-2$, Theorem~\ref{thm:linear_tss} gives that $\rho(Z_n)$ is a singleton or is standard.  But no two distinct elements of the standard totally symmetric set $\mathcal{A}_n$ satisfy the braid relation, so $\rho(Z_n)$ must be a singleton, as desired.

\p{Application to algebraic geometry} Let $\mathcal{U}_{n,d}$ denote the space of smooth degree $d$ hypersurfaces in $\C\mathbb{P}^n$.  We can continue the above line of reasoning in order to constrain the dimension of a representation of $\pi_1(\mathcal{U}_{n,d})$:
\begin{quote}
\emph{
If $\rho: \pi_1(\mathcal{U}_{n,d}) \to \GL_m(\C)$ is a non-cyclic representation with $d\geq 5$, then
\[
m \geq \left\lceil \frac{d-1}{2}\right \rceil^n -1.
\]
}
\end{quote}
We now outline the proof of this fact.  Lönne gave a presentation of $\pi_1(\mathcal{U}_{n,d})$ generalizing the standard presentation of the braid group \cite{Lonne}.  From this presentation, we can see that $\pi_1(\mathcal{U}_{n,d})$ has a totally symmetric set that is analogous to the standard totally symmetric set in the braid group (see Section~\ref{sec:braid}), and has cardinality $\lceil \frac{d-1}{2}\rceil^n$. Just like in the braid group, when $d \geq 5$ this set collapses if and only if $\rho$ has cyclic image, so Theorem~\ref{thm:linear_tss} gives the desired bound.  (In fact, a slightly better bound can be obtained from a related result in the paper of the first author and Salter \cite[Theorem A]{Caplinger-Salter} that classifies commutative totally symmetric sets in $\GL_n(\C)$.)

\p{Totally symmetric configurations} As per Step 2 of the blueprint, we now explain the classification of large totally symmetric configurations used in the classification of large totally symmetric sets in $\GL_n(\C)$.  

The first author and Salter give the following classification \cite[Theorem A]{Caplinger-Salter}.  In the statement, a standard totally symmetric configuration is the collection of 1-dimensional eigenspaces of the elements of the standard totally symmetric set, or the image of this configuration under any element of $\GL_n(\C)$.  The dual of such a configuration is the set of orthogonal complements (in the paper by the first author and Salter these are referred to as simplex configurations and their duals).

\begin{proposition}[Caplinger--Salter]
\label{prop:arrangements}
Let $\mathcal{W}$ be a totally symmetric configuration of subspaces in $\C^n$. Then $|\mathcal{W}| \leq n+1$, and when $n \neq 5$, equality is realized only by a standard configuration or the dual of such.
\end{proposition}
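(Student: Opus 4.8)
The plan is to induct on $n$: first reduce to the nondegenerate case, then establish the bound $|\mathcal W|\le n+1$, and finally classify the configurations of maximal size.

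I would begin with two structural remarks. Since $\Sigma_k$ permutes $\mathcal W=\{W_1,\dots,W_k\}$ transitively, all the $W_i$ have a common dimension $d$; and since $\Sigma_k$ acts transitively on $j$-element subsets, the dimension of $\sum_{i\in S}W_i$, and likewise of $\bigcap_{i\in S}W_i$, depends only on $|S|$. Now put $V=\sum_iW_i$ and $K=\bigcap_iW_i$. Every $W_i$ lies between $K$ and $V$, and every realizer $g_\sigma$ preserves both $V$ and $K$, so $\{W_i/K\}$ is a totally symmetric configuration of \emph{distinct} subspaces of $V/K$. As $\dim(V/K)\le n$, with equality only in the \emph{nondegenerate} case $V=\C^n$, $K=0$, induction lets us assume $\mathcal W$ is nondegenerate; and any $\mathcal W$ with $|\mathcal W|=n+1$ is automatically nondegenerate, since otherwise it would sit in a space of dimension $\le n-1$ and, by the bound there, have at most $n$ elements.

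For the bound, assume $\mathcal W$ nondegenerate. If $d=n$ then all $W_i=\C^n$ and $|\mathcal W|=1$; so assume $d<n$, and (replacing $\mathcal W$ if necessary by its dual configuration of annihilators in $(\C^n)^*$, which interchanges sums with intersections and preserves total symmetry and nondegeneracy) that $2d\le n$. Consider the derived configuration $\mathcal C_1=\{W_1\cap W_j:j\ne1\}$ inside $W_1\cong\C^d$, which is again totally symmetric; by induction $|\mathcal C_1|\le d+1$. By total symmetry the subspaces $W_1\cap W_j$ are either pairwise distinct — in which case $|\mathcal W|-1=|\mathcal C_1|\le d+1\le n$ — or all equal, necessarily to $\bigcap_iW_i=0$, so that $W_i\cap W_j=0$ for all $i\ne j$. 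In the latter case one repeats the argument with $\{(W_1+W_j)/W_1:j\ne1\}$ inside $\C^n/W_1\cong\C^{\,n-d}$: either these are pairwise distinct and $|\mathcal W|-1\le(n-d)+1\le n$, or they all coincide, which forces $W_i\oplus W_j=\C^n$ for all $i\ne j$ and hence $d=n/2$. This last, ``pairwise complementary'' configuration is the only residual case: writing each $W_j$ $(j\ge3)$ as the graph of an isomorphism $W_1\to W_2$ presents $\mathcal W$ as a totally symmetric family in $\GL_{n/2}(\C)$ for the action by left and right multiplication together with inversion, and analyzing the realizers of transpositions shows directly that such a family has at most $n+1$ members. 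In every case $|\mathcal W|\le n+1$.

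For the equality case $|\mathcal W|=n+1$, recall the configuration is nondegenerate. If $d=1$ then $\mathcal W$ is a set of $n+1$ lines, i.e.\ $n+1$ points of $\mathbb P^{n-1}$; their matroid is invariant under every permutation of the lines, hence uniform, so the points are in general position and thus form a projective frame. Since $\GL_n(\C)$ acts transitively (up to scalars) on frames, this configuration is unique up to $\GL_n(\C)$ — the standard one. Dualizing disposes of $d=n-1$, yielding the dual of a standard configuration. It then remains to rule out the intermediate dimensions $1<d<n-1$ (and any stray pairwise-complementary configuration of maximal size) when $n\ne5$. For this the key idea is that the realizers of the transpositions in $\Sigma_{n+1}$ assemble, after quotienting by the pointwise stabilizer of $\mathcal W$ (which contains, and for the configurations at issue is, the scalars), into an $n$-dimensional \emph{projective} representation of $\Sigma_{n+1}$; invoking the classification of low-dimensional (projective) representations of symmetric groups and their covers forces this representation to be a twist of the standard $n$-dimensional representation, whose point-stabilizer $\Sigma_n$ stabilizes only the trivial line and its complement, so $d\in\{1,n-1\}$. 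The one value $n+1=6$ at which this argument fails is exactly the one at which $\Sigma_6$ acquires extra small (projective) representations — equivalently, an outer automorphism — and this is the source of the exceptional maximal configuration, and its dual, in $\C^5$.

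I expect the last paragraph to be the main obstacle. The reduction and the bound $|\mathcal W|\le n+1$ amount to a bookkeeping induction, the only delicate point being the pairwise-complementary case. The equality classification, by contrast, requires genuine input from the representation theory of $\Sigma_{n+1}$: one must both rule out all intermediate-dimensional configurations for $n\ne5$ and identify the exceptional configuration at $n=5$ precisely, verifying that it is not conjugate to a standard configuration or its dual.
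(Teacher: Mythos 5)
The paper does not actually prove this proposition. Immediately after stating it, the authors write that it ``is proved inductively, in tandem with Theorem~\ref{thm:linear_tss}'' in the cited Caplinger--Salter paper, and that ``here, we assume the proposition without proof.'' So there is no in-text argument to compare yours against; one would have to consult the original Caplinger--Salter paper for the actual proof, and I can only assess your proposal on its own terms.

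Evaluating it on those terms: the reduction to the nondegenerate case and the ``pairwise distinct or all equal'' dichotomy for the derived configurations $\{W_1\cap W_j\}$ and $\{(W_1+W_j)/W_1\}$ are sound, and the resulting recursion does yield $|\mathcal W|\le n+1$ \emph{except} in the residual pairwise-complementary case $2d=n$. That case is a genuine gap. The claim that ``analyzing the realizers of transpositions shows directly'' that the induced family in $\GL_{n/2}(\C)$ has at most $n+1$ members is an assertion, not an argument: the realizer of $(1\ 2)$ constrains the graphing isomorphisms $\psi_j$ by a fixed quadratic relation (something like $(\psi_jC)^2=BC$ for $B,C$ independent of $j$), and such a relation can have infinitely many solutions, so you would need to bring in realizers that mix $\{1,2\}$ with $\{3,\dots,k\}$, which you have not done. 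The equality classification for $1<d<n-1$ is likewise a sketch rather than a proof: you assert without justification that the pointwise stabilizer of $\mathcal W$ is exactly the scalars (a priori it can be larger for an intermediate-dimensional configuration, in which case no well-defined projective representation of $\Sigma_{n+1}$ into $\mathrm{PGL}_n(\C)$ is produced), and the appeal to ``the classification of low-dimensional projective representations of symmetric groups and their covers'' would need to be made precise, including excluding spin representations of $2.\Sigma_{n+1}$ on dimension grounds for all relevant $n$ and ruling out reducible $n$-dimensional representations. Finally, your diagnosis of the $n=5$ exception via $\mathrm{Out}(\Sigma_6)$ does not work as stated: the exotic $5$-dimensional representation of $\Sigma_6$ restricts \emph{irreducibly} to the standard point-stabilizer $\Sigma_5$, so it supplies no $\Sigma_5$-invariant subspace of intermediate dimension and hence no configuration of the kind your argument would need to detect; in particular you have not actually exhibited, or even located, the exceptional configuration in $\C^5$.
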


In the work of the first author and Salter, Proposition~\ref{prop:arrangements} is proved inductively, in tandem with Theorem~\ref{thm:linear_tss}.  Here, we assume the proposition without proof, and show how the inductive step for Theorem~\ref{thm:linear_tss} proceeds.

\p{Proof of Theorem~\ref{thm:linear_tss} assuming Proposition~\ref{prop:arrangements}} We discuss the two statements in turn, namely, the upper bound on the size of a totally symmetric set and the classification of large totally symmetric sets.  

We proceed by induction on $n$, with base case $n=1$.  In this base case we are considering $\GL_1(\C) \cong \C^\ast$.  Since the latter is abelian, the largest totally symmetric set is a singleton.  As $1 \leq 2$ the first statement of the proposition is verified.  The second statement is vacuous since the upper bound $n+1$ is not realized in this case.  

Let $X = \{A_1,\ldots,A_k\}\subset \GL_n(\C)$ be a totally symmetric set. Consider the generalized eigenspaces
\[
E_{\lambda,j}^i = \ker\ (A_i- \lambda I)^j.
\]
If for any $\lambda$ and $j$ the arrangement of subspaces
\[
\{E_{\lambda,j}^1,\ldots E_{\lambda,j}^k\}
\]
is non-degenerate (i.e. not a singleton), then Lemma~\ref{cic} (really, the version for configurations) and Proposition~\ref{prop:arrangements} give the bound $k\leq n+1$. Thus, we may henceforth assume that all arrangements $\{E_{\lambda,j}^i\}_{i=1}^k$ are degenerate.  In other words, the $A_i$ share a common Jordan filtration
\[
E_{\lambda,1}^i \subset E_{\lambda,2}^i \subset \cdots
\]
for every eigenvalue $\lambda$. We may therefore drop the superscript and write $E_{\lambda,j}$ for $E_{\lambda,j}^i
$.

Restricting the $A_i$ to any $E_{\lambda,j}$ gives a totally symmetric set in some $\GL_d(\Z)$ with $d < n$. If this restricted totally symmetric set is non-degenerate, 
then by induction we have $k \leq d+1 < n+1$, as desired.  
Similarly, the maps induced by $A_i$ on the quotients $\C^n / E_{\lambda,j}$ are totally symmetric, and if they are nondegenerate, induction applies. 

We are thus left with the case where all restrictions and quotients associated to each $E_{\lambda,j}$ are identical.  This is a strong condition. From here, the first author and Salter use a variety of techniques to coax out totally symmetric sets of smaller dimension.  They then apply induction to obtain the bound $k \leq n$. 

\medskip

We now turn to the second statement of the theorem, the classification of totally symmetric sets of size $n+1$.  The above argument shows that if $k = n+1$, there must be an eigenvalue $\lambda$ and an index $j$ so that the eigenspace arrangement $\{E_{\lambda,j}^i\}$ corresponding to $X=\{A_i\}$ is nondegenerate.  Moreover, Proposition~\ref{prop:arrangements} implies that this arrangement must be the standard totally symmetric configuration or the dual to such.  

Let $\phi: \Sigma_{n+1} \to \GL_n(\C)$ be a realization map for $X=\{A_i\}$, by which we mean that
\[
\phi(\sigma) A_i \phi(\sigma)^{\inv} = A_{\sigma(i)}
\]
for all $\sigma \in \Sigma_{n+1}$ and all $i \in \{1,\dots,k\}$.  The elements of the image of $\phi$ must permute the set $\{E_{\lambda,j}^i\}_{i=1}^k$ accordingly.

The first author and Salter show that, up to scaling, the only realization map for the standard totally symmetric subspace configuration (or its dual) is the standard representation $\Sigma_{n+1} \to \GL_n(\C)$. Since the standard representation is irreducible, no (proper) arrangement of eigenspaces can be degenerate, as such an arrangement would give an invariant subspace. 

The next step is to show that each $A_i$ has two eigenvalues.  There are two cases, namely, where $\{E_{\lambda,j}^i\}_{i=1}^k$ is the standard totally symmetric configuration and its dual.  To illustrate the idea, we treat the former case.  Assume for the purposes of contradiction that $\lambda$ is the only eigenvalue for the $A_i$.  In this case, $\C^n$ is a single Jordan block, and $\{E_{\lambda,j}\}$ does not stabilize until $j = n$. But then if $n > 2$, the generalized eigenspace $\{E_{\lambda,2}\}$ would be a non-degenerate totally symmetric collection of $2$-dimensional subspaces with $n+1$ elements.  This is impossible by Proposition~\ref{prop:arrangements}. 

By the previous paragraph, we may assume that each $A_i$ has two distinct eigenvalues, $\lambda$ and $\mu$.  The standard representation $\phi:\Sigma_{n+1} \to \GL_n(\C)$ must be a realization map for both $\{E_{\lambda,1}\}$ and $\{E_{\mu,1}\}$. Then both the $\lambda$- and $\mu$-eigenspaces of $A_i$ must be stabilized by $\phi(\Stab(i))$, whose fixed subspaces are $D_i$ and $\C v_i$. Since $\Stab(i) \to D_i$ has no subrepresentations, the eigenspaces must be exactly $D_i$ and $\C v_i$.  The theorem follows.  


\section{Totally symmetric sets in braid groups}
\label{sec:braid}

In this section we use the theory of totally symmetric sets to outline a proof of the following result originally due to Dyer--Grossman \cite{Dyer-Grossman}.  Our argument is a modification of the one used by Kordek and the second author \cite{Kordek-Margalit}, simplified for this special case.

For the statement, let $\sigma_1,\dots,\sigma_{n-1}$ denote the standard half-twist generators for the braid group $\B_n$, and let $\epsilon$ be the automorphism of $\B_n$ given by $\sigma_i \mapsto \sigma_i \inv$ for all $i$.  

\begin{theorem}[Dyer--Grossman]
\label{thm:dg}
For $n \geq 3$ the automorphism $\epsilon$ represents the unique nontrivial outer automorphism of $\B_n$. In particular, 
\[
\Aut(\B_n)\cong \B_n/Z(\B_n) \rtimes \mathbb{Z}/2.
\]
\end{theorem}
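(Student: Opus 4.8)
\p{Proof proposal}
The plan is to run the blueprint with $G = H = \B_n$, taking as the large totally symmetric set the standard commuting one, $X = \{\sigma_1, \sigma_3, \ldots\}$, of cardinality $\lfloor n/2 \rfloor$. First I would record the easy half: the subgroup of $\Aut(\B_n)$ generated by $\epsilon$ and the inner automorphisms is already isomorphic to $\B_n/Z(\B_n) \rtimes \Z/2$. Indeed $\epsilon$ acts as $-1$ on $H_1(\B_n;\Z) \cong \Z$ whereas every inner automorphism acts trivially there, so $\epsilon$ is not inner; and $\epsilon^2 = \mathrm{id}$ while $\epsilon$ normalizes the inner automorphisms, carrying conjugation by $g$ to conjugation by $\epsilon(g)$. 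So the theorem reduces to showing that this subgroup is all of $\Aut(\B_n)$, equivalently that $\mathrm{Out}(\B_n)$ has order $2$.

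Next, for $n$ large enough that $|X| \geq 3$, I would invoke the classification of large commuting totally symmetric sets in $\B_n$ obtained (following Kordek and the second author \cite{Kordek-Margalit}) from the classification of totally symmetric configurations of multicurves together with Nielsen--Thurston theory: a commuting totally symmetric set in $\B_n$ of cardinality $\lfloor n/2\rfloor$ is conjugate either to $X$ or to $\epsilon(X) = \{\sigma_1^{-1}, \sigma_3^{-1}, \ldots\}$, and these two are not conjugate since $\sigma_i$ and $\sigma_i^{-1}$ have opposite exponent sum. Now let $f \in \Aut(\B_n)$. Then $f(X)$ is a commuting totally symmetric set of cardinality $|X|$: by Lemma~\ref{cic} it is that or a singleton, and $f$, being injective, cannot collapse it. Replacing $f$ by $c \circ f$ for a suitable $c$ in the subgroup generated by $\epsilon$ and the inner automorphisms --- which does not change whether $f$ lies in that subgroup --- I may assume $f(X) = X$. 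Then $f$ restricts to a permutation of $X$, so by the total symmetry of $X$ itself one further composition with an inner automorphism makes $f$ fix each element of $X$; in particular $f$ fixes $\sigma_1$, which generates $H_1(\B_n;\Z)$, so $f$ preserves exponent sum.

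The \emph{main obstacle} is the final step: promoting ``$f$ fixes $X$ pointwise'' to ``$f = \mathrm{id}$''. Unlike in the symmetric-group argument this is not automatic, since $X$ does not generate $\B_n$, and I expect it to absorb most of the work. I would pass to the geometric model, in which $\B_n$ is the mapping class group of a disk with $n$ marked points, $\sigma_j$ is the positive half-twist about the standard curve $c_j$ enclosing marked points $j$ and $j+1$, and $X$ underlies the standard totally symmetric configuration of the disjoint curves $c_1, c_3, \ldots$. For each even $j$, the generator $\sigma_j$ is conjugate in $\B_n$ to $\sigma_1$, which $f$ fixes, so $f(\sigma_j)$ is conjugate to $\sigma_j$ and hence is the positive half-twist about some curve $c_j'$ enclosing exactly two marked points; moreover $c_j'$ must be disjoint from every $c_i$ with $i$ odd and $|i - j| \geq 2$ (those half-twists are fixed by $f$ and commute with $f(\sigma_j)$) and must meet each of $c_{j-1}$ and $c_{j+1}$ in two points (braid relations). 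A careful analysis of which simple closed curves satisfy these constraints --- together with further normalization by inner automorphisms supported in the subsurfaces complementary to $X$, and an induction on the number of strands --- should force $c_j' = c_j$ for all $j$, hence $f(\sigma_j) = \sigma_j$ for every generator and $f = \mathrm{id}$. This geometric ``pinning down of the chain $\sigma_1, \ldots, \sigma_{n-1}$'' is exactly where Nielsen--Thurston theory enters, playing the role of the ``coaxing out of smaller totally symmetric sets'' step in the general linear case. Finally, small values of $n$, for which $|X|$ is too small for the classification to apply, would be handled separately, as in the original argument of Dyer--Grossman.
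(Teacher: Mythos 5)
Your proposal follows the same blueprint as the paper: use the large commuting totally symmetric set $X = \{\sigma_1, \sigma_3, \ldots\}$, invoke a classification of such sets, normalize $f$ to fix $X$, and then geometrically pin down the even-index generators by a chain argument. The ``easy half'' reduction, the observation that injectivity prevents collapse, and the final curve-based argument (half-twists, commutation, braid relations, change of coordinates) are all essentially what the paper does.

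The genuine gap is in the classification step, which you state incorrectly. It is not true that every commuting totally symmetric set of cardinality $\lfloor n/2 \rfloor$ is conjugate to $X$ or to $\epsilon(X)$. The classification the paper actually uses (Proposition~\ref{prop:bn}, citing \cite[Lemma 2.6]{Kordek-Margalit}) says such a set is a \emph{modification} of one of \emph{four} standard types $Z_n$, $Z_n^\ast$, $\widehat Z_n$, $\widehat Z_n^\ast$, where a modification raises all elements to a common nonzero power $\ell$ and multiplies by a common power $z^s$ of the central element. So for instance $\{\sigma_1^2, \sigma_3^2, \ldots\}$ is a commuting totally symmetric set of the right cardinality that is conjugate to neither $X$ nor $\epsilon(X)$. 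Your proposal needs to (i) rule out the three dual and hatted types as possible images of $Z_n$ under an automorphism (the paper dispatches these by ``similar reasoning'' but they must be addressed), and (ii) show that the modification is trivial, i.e.\ $\ell = \pm1$ and $s = 0$. The paper gets $\ell = \pm1$ not from the classification but from a separate lemma about half-twists satisfying the braid relation (\cite[Lemma 4.9]{bellmargalit}), and $s = 0$ from compatibility with the abelianization $\B_n \to \Z$. You gesture at exponent sum to distinguish $X$ from $\epsilon(X)$, but do not use it to eliminate nontrivial $\ell$ or $s$, and your claim that one may ``compose with $c$ in $\langle \epsilon, \mathrm{Inn}\rangle$ to arrange $f(X)=X$'' does not follow from what you have established, since none of those automorphisms can undo a nontrivial modification. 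Once these points are filled in the remainder of your proposal tracks the paper's argument closely.
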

The original proof of this theorem by Dyer--Grossman has an algebraic flavor.  The proof we outline here, while only valid as stated for $n \geq 5$, uses combinatorial topology and the theory of mapping class groups.  

In what follows we consider $\B_n$ as the mapping class group of the disk $D_n$ with $n$ marked points in the interior (not to be confused with the dihedral group!).  We will use a number of aspects of the theory of mapping class groups, including the Nielsen--Thurston classification theorem, the canonical reduction systems of Birman--Lubotzky--McCarthy, and the change of coordinates principle.  We refer the reader to the book by Farb and the second author of this article for background on these topics \cite{Primer}.  From the point of view of mapping class groups, the Dyer--Grossman theorem can be stated as: every automorphism of $\B_n$ is induced by a homeomorphism of $D_n$.  

The outline for this section mirrors the one for Section~\ref{sec:gl}.  This stands to reason, as we will be following the same blueprint.

\p{A large commutative totally symmetric set} As per Step 1 of the blueprint, we will require the services of a large (commutative) totally symmetric set in $\B_n$.  The desired set is:
\[
\{\sigma_1, \sigma_3, \sigma_5, \dots, \sigma_m \}
\]
where $m$ is $n-1$ or $n-2$, according to whether $n$ is even or odd, respectively.  As such, the cardinality of this set is $\lfloor n/2 \rfloor$.  That this set is a totally symmetric set is an application of the change of coordinates principle from the theory of mapping class groups.  We refer to any $\B_n$-conjugate of this totally symmetric set as a standard totally symmetric set in $\B_n$.  

\p{Crash course in Nielsen--Thurston theory} Briefly, the Nielsen--Thurston classification gives that every braid is periodic, pseudo-Anosov, or reducible.  Periodic braids have powers that are central in $\B_n$; they correspond to rotations of $D_n$.  Each reducible braid preserves a multicurve, that is, the isotopy class of a collection of pairwise disjoint and pairwise non-homotopic simple closed curves in $D_n$.  Any such multicurve is called a reduction system for the braid.  Pseudo-Anosov braids do not preserve any multicurve.  

For a reducible braid $b$, we may restrict $b$ to the complementary components, and inductively apply the classification.  Thus, there exists a reduction system with the property that the associated restrictions are all periodic or pseudo-Anosov.  There is in fact a unique minimal such reduction system, called the canonical reduction system $\CRS(b)$.  We will use the following properties of canonical reduction systems:
\begin{enumerate}
\item for $a,b \in \B_n$ we have $\CRS(aba\inv) = a\CRS(b)$, and
\item if $a$ and $b$ commute then $\CRS(a)$ and $\CRS(b)$ have trivial geometric intersection.
\end{enumerate}
For the proof, the main points to keep in mind are that there is a map
\[
\CRS : \B_n \to \{\text{multicurves in } D_n\},
\]
and that this map satisfies the above two properties.  

\p{Totally symmetric configurations of multicurves} The configurations we will use to address Step 2 of the blueprint are collections of multicurves in the disk $D_n$.  To each element $g$ of $\B_n$ we associate its canonical reduction system $\CRS(g)$.  Then, to a totally symmetric set $X = \{g_1,\dots,g_k\}$ we can associate the collection of multicurves
\[
\{ \CRS(g_1),\dots, \CRS(g_k)\}.
\]
Because $X$ is totally symmetric, this multicurve configuration is totally symmetric in the sense that for any $\sigma \in \Sigma_k$ there is a braid $g_\sigma$ so that
\[
g_\sigma \cdot ( \CRS(g_1),\dots, \CRS(g_k) ) = ( \CRS(g_{\sigma(1)}),\dots, \CRS(g_{\sigma(k)}) )
\]
(using the first property of $\CRS$ above).  When $X$ is a commutative totally symmetric set, the multicurves $\CRS(g_i)$ have trivial intersection pairwise (using the second property); in what follows, we refer to such a collection as a noncrossing multicurve configuration. 

In the original work of Kordek and the second author, they associate a single labeled multicurve to $X$ instead of a configuration of multicurves.  This is equivalent, but we take this point of view to make the analogy with $\GL_n(\C)$ more clear.

\p{Large noncrossing totally symmetric multicurve  configurations} We now turn towards Step~2 of the blueprint. 
 Let us realize $D_n$ at the closed unit disk in the complex plane, with all marked points on the real axis.  For $1 \leq i \leq n-1$, let $c_i$ be the isotopy class of curves corresponding to a round circle surrounding the $i$th and $(i+1)$st marked points.  The curves are chosen precisely so that $\CRS(\sigma_i)$ is equal to $c_i$.  We define the following noncrossing totally symmetric multicurve in $D_n$:
\[
M_n = \{c_1,c_3,\dots\}.
\]
This is the noncrossing totally symmetric multicurve configuration associated to the totally symmetric set $\{\sigma_1,\sigma_3,\dots\}$ in $\B_n$.  

There are two variations on $M_n$ that we will need to consider.  First, we have the dual totally symmetric configuration
\[
M_n^\ast = \{\{c_1\}^\complement,\{c_3\}^\complement,\dots\}
\]
where $\{c_i\}^\complement$ is the complementary configuration to $c_i$ in $M_n$.  In other words, $\{c_i\}^\complement$ is the multicurve whose components are all the curves appearing in $M_n$ except for $c_i$.

Second, when $n$ is odd, we have the totally symmetric labeled multicurve
\[
\widehat M_n = \{\{c_1,d\},\{c_3,d\},\dots\},
\]
where $d$ is represented by the round curve surrounding the first $n-1$ marked points.  Finally, we can combine these two variations in order to obtain 
\[
\widehat M_n^\ast = \{  \{c_1\}^\complement \cup \{d\},  \{c_3\}^\complement \cup \{d\},  \dots   \}.
\]
That all of these multicurve configurations are totally symmetric follows again from the change of coordinates principle.

Kordek and the second author prove \cite[Lemma 2.3]{Kordek-Margalit} that in fact these are the only examples of large noncrossing totally symmetric multicurve configurations in $D_n$.  In the statement, a configuration of multicurves $\{ m_1,\dots,m_k \}$ is degenerate if two $m_i$ are equal.  

\begin{proposition}[Kordek--Margalit]
\label{prop:multi}
Let $M = \{m_1,\dots,m_k\}$ be a nondegenerate, noncrossing,  totally symmetric multicurve configuration in $D_n$ with $k = \lfloor n/2 \rfloor$. Then $M$ is $\B_n$-equivalent to one of $M_n$, $M_n^\ast$, $\widehat{M}_n$, or $\widehat{M}^\ast_n$.
\end{proposition}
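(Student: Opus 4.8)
The plan is to translate the problem into the combinatorics of subsets of the $n$ marked points, classify the ambient multicurve $N = m_1 \cup \cdots \cup m_k$, and then determine how the individual $m_i$ sit inside $N$. An essential simple closed curve in $D_n$ is determined up to isotopy by the set of marked points it encloses, a subset of size between $2$ and $n-1$; a multicurve corresponds to a laminar (pairwise nested-or-disjoint) family of such subsets; and the noncrossing hypothesis says precisely that all the curves occurring in the various $m_i$ can be realized simultaneously disjointly, so their union $N$ is again a multicurve. A braid $g_\sigma$ realizing a permutation $\sigma$ of the $m_i$ preserves $N$ and hence permutes the components of $N$; thus $\Sigma_k$ acts, through the realizing braids, on the set of component curves of $N$, compatibly with its permutation action on the $m_i$. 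Since the components of each $m_i$ form a sub-collection of the components of $N$, it suffices to (i) classify $N$ together with this $\Sigma_k$-action, and (ii) determine which sub-multicurves of $N$ can serve as the $m_i$.

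For (i), the crucial claim is that the innermost curves of $N$---those whose interior disk contains no other component of $N$---are exactly $k$ round curves, each enclosing two marked points, permuted transitively (indeed primitively) by $\Sigma_k$. One direction is easy: the interior disks of distinct innermost curves are disjoint, so such curves enclose pairwise disjoint subsets of size at least $2$, and there are at most $\lfloor n/2\rfloor = k$ of these. The reverse count, and the exclusion of larger innermost curves (for instance one enclosing three points when $n$ is odd), is the heart of the matter; I would establish it by combining the above bound with an induction on $n$, cutting $D_n$ along an innermost curve---or a nontrivial $\Sigma_k$-orbit of innermost curves---that is too large and invoking the inductive hypothesis on the resulting disk(s). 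This is the step I expect to be the main obstacle, and it is where the Nielsen--Thurston bookkeeping of Kordek--Margalit is genuinely needed. Granting it, write $c_1,\dots,c_k$ for the innermost curves. Any remaining component $e$ of $N$ is disjoint from every $c_j$, so the subset of marked points it encloses is a union of some of the two-point sets together with possibly the single leftover point $p_0$ present when $n$ is odd. The $\Sigma_k$-orbit of $e$ consists of components of $N$ enclosing equal-size unions of the $c_j$; distinct such components are disjoint, so their $c_j$-supports form a system of at least two disjoint blocks of $\{c_1,\dots,c_k\}$ permuted transitively by $\Sigma_k$, which contradicts primitivity unless the common block size is $1$---but a size-$1$ block forces $e$ either to coincide with some $c_j$ (absurd) or to cross $p_0$ (impossible in a multicurve). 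Hence the orbit is a single curve, $e$ is $\Sigma_k$-fixed, and it encloses a $\Sigma_k$-invariant union of the $c_j$, i.e.\ none or all of them. Essentiality forbids the empty union (too few points) and forbids the full union together with $p_0$ (that curve is the boundary), leaving exactly one option: $n$ is odd and $e$ is the unique curve $d$ enclosing the $n-1$ paired points. Thus $N$ equals $\{c_1,\dots,c_k\}$, or, when $n$ is odd, possibly $\{c_1,\dots,c_k,d\}$.

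For (ii), record for each $i$ the set $A_i \subseteq \{1,\dots,k\}$ of indices $j$ with $c_j$ a component of $m_i$, and, when $d$ exists, whether $d \in m_i$. Compatibility of the realizing braids with the $\Sigma_k$-action gives $\sigma(A_i) = A_{\sigma(i)}$ for all $\sigma$ and $i$; applying this with $\sigma$ in the stabilizer of $i$ forces $A_i$ to be one of $\emptyset$, $\{i\}$, $\{1,\dots,k\}\setminus\{i\}$, or $\{1,\dots,k\}$, and likewise the set of $i$ with $d \in m_i$ is $\Sigma_k$-invariant, hence empty or all of $\{1,\dots,k\}$. Nondegeneracy rules out the constant choices $A_i \equiv \emptyset$ and $A_i \equiv \{1,\dots,k\}$, and if $d$ is present it must lie in every $m_i$, since otherwise it would not appear in $N$. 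Thus each $m_i$ is either the single round curve $c_i$ or the union of all round curves except $c_i$, in either case with $d$ adjoined or not---which are exactly the four configurations $M_n$, $M_n^\ast$, $\widehat M_n$, $\widehat M_n^\ast$. Finally, the change of coordinates principle shows that any $k$ disjoint round two-point curves partitioning $\lfloor n/2\rfloor$ pairs of marked points, possibly together with a single curve enclosing exactly those $2k$ points, is carried to the standard picture by an element of $\B_n$; this promotes ``combinatorially one of the four'' to ``$\B_n$-equivalent to one of the four'' and finishes the proof.
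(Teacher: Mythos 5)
You have the right framework---the noncrossing hypothesis makes $N = m_1 \cup \cdots \cup m_k$ a multicurve, and the problem is to classify $N$ together with how the $m_i$ sit inside it---but the proposal stops short at exactly the step that carries the content. Everything is reduced to the claim that the innermost components of $N$ are $k$ round curves enclosing two marked points apiece; you call this ``the heart of the matter'' and ``the step I expect to be the main obstacle,'' and then proceed with ``Granting it\ldots''. That claim essentially \emph{is} the proposition, and gesturing at an unspecified induction does not discharge it. You also misattribute the difficulty: no Nielsen--Thurston theory is needed to prove Proposition~\ref{prop:multi}. That machinery enters only in Proposition~\ref{prop:bn}, where the multicurve classification is promoted to a classification of totally symmetric sets of braids; the present proposition is elementary combinatorial topology.

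What the paper does instead is a short, direct counting argument. For a component curve $c$ of some $m_i$, let $p$ be the number of marked points it encloses and $d \geq 1$ the number of the $m_i$ containing it. Total symmetry produces, for each $d$-element subset $T \subseteq \{1,\dots,k\}$, a curve enclosing $p$ points that lies in exactly the $m_j$ with $j \in T$; these $\binom{k}{d}$ curves are pairwise distinct and pairwise disjoint. Since $D_n$ carries at most $n-2$ pairwise disjoint, pairwise non-isotopic essential curves, and at most $\lfloor n/p\rfloor$ of them can enclose exactly $p$ points with pairwise disjoint interiors, the hypothesis $k = \lfloor n/2\rfloor$ forces $p=2$ and $d\in\{1,k-1\}$, or else $d=k$. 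This is what pins down the innermost curves you want, and the remaining bookkeeping is close to your part (ii). One smaller caveat on that part: $\sigma(A_i) = A_{\sigma(i)}$ does not follow directly---the realizing braid $g_\sigma$ only yields some permutation $\tau_\sigma$ of the innermost curves with $\tau_\sigma(A_i) = A_{\sigma(i)}$, and since the $g_\sigma$ are not canonical, $\sigma\mapsto\tau_\sigma$ need not be a homomorphism, let alone the identity under your labeling. This is fixable, but it requires an argument you do not give.
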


The main idea of the proof is as follows.  Suppose that some $m_i$ contains a curve $c_i^p$ surrounding $p$ marked points.  By total symmetry each of the $m_i$ contains such a curve.  Then if $p > 2$ it must be that these $c_i^p$ are not distinct (otherwise the noncrossing condition would be violated).  We are then led to consider the case that some curve $c$ surrounds exactly $p$ marked points and lies in exactly $d$ of the $m_i$.  Again applying total symmetry, there must be $k \choose d$ such curves, all with pairwise trivial geometric intersection, and all surrounding $p$ marked points.  But for $d < k$, the quantity $k \choose d$ is quadratic in $k = \lfloor n/2 \rfloor$, hence quadratic in $n$.  Again, this violates the noncrossing condition (there are in fact at most $n-2$ pairwise non-isotopic curves in $D_n$ with pairwise trivial geometric intersection).  It follows that the only possibilities are that each curve appearing in an $m_i$ surrounds exactly two marked points, or it lies in all the $m_i$.  From here the proof is straightforward.

\p{Large commutative totally symmetric sets} Continuing with Step 2 of the blueprint, we now explain how Proposition~\ref{prop:multi} is used to classify large commutative totally symmetric sets in $\B_n$.   

To each noncrossing totally symmetric multicurve configuration $M_n$, $M_n^\ast$, $\widehat{M}_n$ and $\widehat{M}^\ast_n$, there is an associated commutative totally symmetric set in $\B_n$, namely:
\begin{align*}
Z_n &= \{ \sigma_1, \sigma_3, \dots \}, \ 
Z_n^\ast = \{ \sigma_1^\ast, \sigma_3^\ast,\dots \}, \\
& \widehat Z_n = \{ \sigma_1 T_d, \sigma_3 T_d, \dots \}, \text{ and } 
\widehat Z_n^\ast = \{ \sigma_1^\ast T_d, \sigma_3^\ast T_d,\dots \}. 
\end{align*}
Here $\sigma_i^\ast$ is the product of the elements of $Z_n$ not equal to $\sigma_i$ and $T_d$ is the Dehn twist about the curve used in the definitions of $\widehat M_n$ and $\widehat{M}_n^\ast$.  We refer to any $\B_n$-conjugate of any of these as a standard commutative totally symmetric set in $\B_n$.  

We can modify any of the standard totally symmetric sets by raising all elements to the same nonzero power.  We can also modify them by multiplying all elements of the set by the same power of $z$, a generator for the (cyclic) center of $\B_n$.  We refer to any totally symmetric set obtained in this way as a modification of a standard commutative totally symmetric set in $\B_n$.  Kordek and the second author prove the following \cite[Lemma 2.6]{Kordek-Margalit}.

\begin{proposition}
\label{prop:bn}
Let $n \geq 3$ and let $k = \lfloor n/2 \rfloor$.  Any commutative totally symmetric set $X$ in $\B_n$ with $|X|=k$ is a modification of a standard one. 
\end{proposition}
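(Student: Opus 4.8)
The plan is to run the blueprint in reverse.  Starting from a commutative totally symmetric set $X = \{g_1,\dots,g_k\} \subset \B_n$ with $k = \lfloor n/2 \rfloor$, I would attach to it the noncrossing totally symmetric multicurve configuration $\mathcal{M} = \{\CRS(g_1),\dots,\CRS(g_k)\}$, apply Proposition~\ref{prop:multi} to $\mathcal{M}$, and then promote the resulting rigidity of $\mathcal{M}$ to rigidity of $X$ by way of Nielsen--Thurston theory.  The argument proceeds by induction on $n$, where the inductive hypothesis also carries the size bound that a commutative totally symmetric set in $\B_m$ has at most $\lfloor m/2\rfloor$ elements; one may assume $n$ is large enough that $k \geq 2$, the small remaining cases being handled directly.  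The first dichotomy is whether $\mathcal{M}$ is \emph{nondegenerate} (two of the $\CRS(g_i)$ differ) or \emph{degenerate} (all of the $\CRS(g_i)$ coincide).

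Suppose first that $\mathcal{M}$ is nondegenerate.  Since $|\mathcal{M}| = k = \lfloor n/2 \rfloor$, Proposition~\ref{prop:multi} lets us conjugate so that $\mathcal{M}$ is exactly one of $M_n$, $M_n^\ast$, $\widehat M_n$, or $\widehat M_n^\ast$; I would treat the case $\mathcal{M} = M_n$ in detail, the other three being entirely analogous and producing modifications of $Z_n^\ast$, $\widehat Z_n$, and $\widehat Z_n^\ast$ respectively.  With $\CRS(g_i) = c_i$, each $g_i$ preserves the round circle $c_i$, so $g_i$ is recorded by the power $h_i^{a_i}$ of the half-twist $h_i$ by which it acts inside $c_i$, together with the mapping class it induces on the complement of $c_i$.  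Two devices then pin these data down.  First, conjugation equivariance of $\CRS$ forces each realizing braid to permute the $c_i$ as prescribed, which forces the interior exponents $a_i$ to agree with a common value $a$, and $a \neq 0$ because otherwise $c_i$ would not lie in $\CRS(g_i)$.  Second, collapsing every $c_i$ to a marked point gives a homomorphism $\B_n \to \B_m$ with $m < n$; by the inductive size bound a commutative totally symmetric set in $\B_m$ has at most $\lfloor m/2\rfloor < k$ elements, so the image of $X$ is too large to be nondegenerate and hence, by Lemma~\ref{cic}, collapses, which says precisely that each $g_i g_j\inv$ lies in the kernel of the collapse map, namely the subgroup generated by the interior half-twists.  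Combining these two facts pins the $g_i$ down up to a common reparametrization and a common power of the central element $z$, which is exactly to say that $X$ is a modification of $Z_n$.

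Now suppose $\mathcal{M}$ is degenerate, so all $\CRS(g_i)$ equal a single multicurve $c$; since the realizing braids permute the $\CRS(g_i)$, they stabilize $c$, so $X \subset \Stab(c)$.  If $c = \varnothing$, then every $g_i$ is periodic or pseudo-Anosov, and a short Nielsen--Thurston argument---commuting pseudo-Anosov braids share invariant foliations, commuting periodic braids lie in a common virtually cyclic subgroup, and the relevant normalizers are themselves virtually cyclic---shows that $X$ lies in a virtually cyclic subgroup, which forces $|X| \leq 2$ and hence contradicts $k \geq 3$; the small values of $n$ are checked by hand.  If $c \neq \varnothing$, then restricting the elements of $X$ to the complementary pieces of $c$ and recording their twisting along the components of $c$ produces, by total symmetry, totally symmetric data in braid groups $\B_m$ with $m < n$ together with totally symmetric data in the free abelian twist group.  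The abelian data collapses automatically, and the inductive hypothesis together with the bound $\lfloor m/2\rfloor < \lfloor n/2\rfloor$ for $m<n$ forces $X$ to collapse on every complementary piece, hence $X$ is degenerate as a set---a contradiction---unless some piece is a braid group $\B_m$ with $\lfloor m/2\rfloor \ge k$.  Comparing $\lfloor m/2\rfloor$ with $\lfloor n/2\rfloor$, this can only happen when $n$ is odd and $c$ is a single curve enclosing $n-1$ marked points; but minimality of the canonical reduction system then forces the interior restrictions to have empty $\CRS$, again contradicting the inductive classification in $\B_{n-1}$ (after a small-case check), so this subcase is in fact accounted for by the nondegenerate configurations $\widehat M_n$, $\widehat M_n^\ast$.

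I expect the main obstacle to be the promotion step in the nondegenerate case: passing from rigidity of the multicurve configuration to rigidity of the braids themselves requires careful bookkeeping of the structure of the stabilizer of a multicurve---how a reducible braid is assembled from its restrictions to complementary pieces and its twists about the curves---and then a disciplined use of total symmetry to confirm that the only surviving freedom is precisely the ``raise all elements to a common nonzero power and multiply by a common power of $z$'' freedom built into the definition of a modification.  The collapsing homomorphism and Lemma~\ref{cic} together kill most of this freedom; making sure nothing else slips through---in particular keeping track of the signs of the twists and of the differences among the four standard configurations---is where the real work lies.
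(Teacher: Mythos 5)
Your first step tracks the paper exactly: form the configuration $\{\CRS(g_i)\}$ and feed it into Proposition~\ref{prop:multi}.  After that, however, you diverge from the paper's argument in the promotion step, and this is where the gap sits.  The paper's promotion is direct and non-inductive: after conjugating so that $\CRS(g_i) = c_{2i-1}$, the restriction of $g_1$ to the exterior of $c_1$ must be periodic, pseudo-Anosov, or trivial (because $c_1$ is the \emph{entire} canonical reduction system of $g_1$), and since $g_1$ commutes with $g_3, g_5, \dots$ it must fix the curves $c_3, c_5, \dots$; a nontrivial periodic or pseudo-Anosov map cannot fix that many disjoint curves, so the exterior restriction is trivial and $g_1 = \sigma_1^\ell z^s$ immediately.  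Your replacement for this---collapse the curves $c_i$ to marked points, land in a smaller $\B_m$, and invoke an inductive size bound so the image collapses---only constrains what happens \emph{inside} and \emph{along} the curves $c_i$.  Concretely, the kernel of your collapse map consists of mapping classes supported in the interior disks together with Dehn twists about the $c_i$, so $g_i g_j^{-1}$ lying in this kernel shows exactly that all the $g_i$ share a common \emph{outer} mapping class $e$; it does not show $e$ is central.  To rule out a nontrivial $e$ you would still need the observation that $e$ has empty $\CRS$ (else $\CRS(g_i)$ would be larger than $\{c_{2i-1}\}$) and that a periodic or pseudo-Anosov $e$ fixing $c_3, c_5, \dots$ is impossible---which is precisely the paper's direct argument.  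So the inductive detour through the collapse map does not actually save you from the step you flagged as ``the real work''; it only defers it.

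Two further remarks.  First, your induction smuggles a size bound (``a commutative totally symmetric set in $\B_m$ has at most $\lfloor m/2\rfloor$ elements'') into the inductive hypothesis, but your sketch of the inductive step only reproves the classification, not the bound; to run the induction as you set it up, the bound would need its own argument in the step (it does not follow formally from the classification of sets of size \emph{exactly} $\lfloor m/2\rfloor$).  Second, you address the degenerate case (all $\CRS(g_i)$ coinciding), which the survey's sketch silently skips.  The $c = \emptyset$ branch of your analysis is the right idea (commuting periodic or pseudo-Anosov braids lie in a virtually cyclic subgroup, forcing $|X| \leq 2$), but the $c \neq \emptyset$ branch as written is hard to follow and would need to be tightened before it could be called a proof.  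Net: you have the right skeleton, and the degenerate-case discussion is a genuine addition, but the promotion step in the nondegenerate case is missing the one topological input the paper uses, and the inductive scaffolding you built around it does not supply a substitute for that input.
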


To prove the proposition, we of course first use the fact that, through canonical reduction systems, $X$ gives rise to a noncrossing totally symmetric multicurve configuration.  We then use Proposition~\ref{prop:multi} to reduce to the four cases of multicurve configurations given there.

We then treat the four cases in turn.  For the case $\CRS(X) = M_n$, the idea is as follows.  Say that $X$ is $\{g_1,\dots,g_k\}$.  Up to conjugation in $\B_n$ we may assume that $\CRS(g_i)$ is equal to $c_{2i-1}$.  Note that this is the canonical reduction system of $\sigma_{2i-1}$, the $i$th element of $Z_n$.  

We would like to show that $X$ is equal to $Z_n$.  On the exterior of $c_1$ the element $g_1$ is either the identity, periodic, or pseudo-Anosov.  But since $g_1$ commutes with the other $g_i$, it must fix the curves $c_3, c_5,\dots$    There is no (nontrivial) periodic or pseudo-Anosov map that can fix these curves.  It follows that this exterior component of $g_1$ is trivial, and hence that $g_1$ is equal to $\sigma_1^\ell z^s$ for some nonzero $\ell$ and some $s$.  The other three cases are similar.

\p{Proof of Theorem~\ref{thm:dg} assuming Proposition~\ref{prop:bn}} Let $\rho : \B_n \to \B_n$ be an automorphism.  By Lemma~\ref{cic}, the image of $Z_n = \{ \sigma_1, \sigma_3 , \dots\}$ is either a singleton or a commutative totally symmetric set of the same size.  In the first case, it follows that $\rho$ has cyclic image.  Indeed, for $n \geq 5$ the normal closure of $\sigma_1\sigma_3^{-1}$ is $\B_n'$ and $B_n/\B_n' \cong \Z$.  Thus, we may henceforth assume that $\rho(Z_n)$ is a commutative totally symmetric set of cardinality $|Z_n|$. 

By Proposition~\ref{prop:bn}, $\rho(Z_n)$ is conjugate to a modification of $Z_n$, $Z_n^\ast$, $\widehat Z_n$, or $\widehat Z_n^\ast$.  Let us consider the first case.  Up to conjugating $\rho$, we may assume that $\rho(Z_n)$ is exactly a modification  of $Z_n$, that is,
\[
\rho(\sigma_i) = \sigma_i^\ell z^s
\]
for all odd $i$.  

For $i$ even, we then have that $\rho(\sigma_i)$ is conjugate to $\sigma_i^\ell z^s$.  So each such $\rho(\sigma_i)$ is equal to $H_{a_i}^\ell z^s$, where $H_{a_i}$ is the half-twist about a curve $a_i$.  

It is a fact that if $H_a$ and $H_b$ are the half-twists about curves $a$ and $b$ in $D_n$, and they satisfy the braid relation
\[
H_a^\ell H_b^\ell H_a^\ell = H_b^\ell H_a^\ell H_b^\ell,
\]
then $i(a,b)=2$ and $\ell = \pm 1$; see \cite[Lemma 4.9]{bellmargalit}.  Up to the exceptional  automorphism $\epsilon$, we may assume that $\ell=1$.  It then further follows that $s=0$, since an automorphism of $\B_n$ must preserve word length, that is, it respects the abelianization $\B_n \to \Z$. 

It also follows that the sequence of curves
\[
c_1, a_2, c_3, a_4, \dots
\]
is a chain, meaning that consecutive curves intersect twice and all other pairs of curves have trivial geometric intersection.  Up to automorphisms of $\B_n$, we then have (by change of coordinates)
\[
\rho(\sigma_i) = \sigma_i
\]
for all $i$.  In other words, up to modifying $\rho$ by automorphisms, it is the identity.  This completes the proof in the first case.   Using similar reasoning, we rule out the other three possibilities for $\rho(Z_n)$, completing the proof.

\p{From braid groups to mapping class groups} Chen--Mukherjea \cite{Chen-Mukherjea} use a similar approach to classify homomorphisms from the braid group $\B_n$ to the mapping class group $\Mod(S_g)$ when $g < n-2$.  As a corollary, they partially recover the result of Aramayona--Souto \cite{AS} classifying homomorphisms $\Mod(S_g) \to \Mod(S_h)$ for $h < 2g$.


\section{Finite quotients of braid groups and mapping class groups}
\label{sec:kolay}

In 1947, Emil Artin \cite{artinbp} proved that for $n \geq 5$ every non-cyclic homomorphism $\B_n \to \Sigma_n$ is standard.  This means that up to conjugacy, the map sends $\sigma_i$ to the transposition $(i \ \ i+1)$ for all $i$.  His proof uses Bertrand's postulate, a deep fact from number theory which states that every interval $[n,2n]$ contains a prime number.  Artin wrote: ``it would be preferable if a proof could be found that does not make use of this fact.''

Kolay \cite{kolay} found in 2021 a short, elementary proof of Artin's theorem, and in fact proved more.  In the statement, we say that a quotient map is minimal if there is no quotient map whose codomain has smaller cardinality.

\begin{theorem}[Kolay]
\label{thm:kolay}
Let $n \geq 3$.  Up to conjugacy, there is a unique minimal non-cyclic quotient of $\B_n$, namely, the standard map $\B_n \to \Sigma_n$ for $n \neq 4$ or the standard map $\B_4 \to \Sigma_3$.
\end{theorem}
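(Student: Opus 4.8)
The plan is to run the blueprint with the \emph{collapsing set} $\{\sigma_1,\dots,\sigma_{n-1}\}\subset\B_n$ --- which is not totally symmetric but does satisfy collision-implies-collapse --- and then convert distinctness of the images of the generators into a lower bound on the order of the quotient via a filtration. Fix a non-cyclic quotient $f\colon\B_n\to Q$, write $a_i=f(\sigma_i)$, and set $Q^{(k)}=\langle a_1,\dots,a_k\rangle$, so that $\B_{k+1}\twoheadrightarrow Q^{(k)}$ and $Q^{(n-1)}=Q$. The small cases come first: every abelian quotient of $\B_n$ is cyclic (as $\B_n^{\mathrm{ab}}\cong\Z$) and the smallest non-abelian group is $\Sigma_3$, so $\B_3\to\Sigma_3$ is the minimal non-cyclic quotient when $n=3$, and for $n=4$ the composite $\B_4\to\B_3\to\Sigma_3$ is non-cyclic of order $6$ and is minimal for the same reason; uniqueness in each case is a short computation inside $\Sigma_3$. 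Note that in the $n=4$ example $\sigma_1$ and $\sigma_3$ have the \emph{same} image: this is the source of the $n=4$ exception, and is exactly where the next step breaks down.

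\emph{Collapsing step ($n\ge5$).} I would show that a collision $f(\sigma_i)=f(\sigma_j)$ with $i\neq j$ forces all the $f(\sigma_i)$ to coincide, so that $f$ is cyclic. For $j=i+1$ this is immediate from $\normalclosure{\sigma_i\sigma_{i+1}\inv}=\B_n'$. For $|i-j|\ge2$ the elements $\sigma_i$ and $\sigma_j$ commute, and when $n\ge6$ one can choose $\sigma_\ell$ commuting with both so that $\{\sigma_i,\sigma_j,\sigma_\ell\}$ is totally symmetric; Lemma~\ref{cic} then propagates the collision across all odd and then all even indices, and a one-line argument combining a braid relation with a commutation relation collapses everything. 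The single leftover value $n=5$ is handled directly from the braid relations. Hence, since $f$ is non-cyclic, the $a_i$ are pairwise distinct.

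\emph{Filtration and the key estimate.} I would induct on $m\ge3$: a non-cyclic quotient $Q$ of $\B_m$ whose generator-images are pairwise distinct has $|Q|\ge m!$, with equality only for the standard map onto $\Sigma_m$. The base $m=3$ is the small-case analysis. For the inductive step (any $m\ge4$) apply the previous case to $\B_{m-1}\twoheadrightarrow Q^{(m-2)}$ --- non-cyclic because $a_1,\dots,a_{m-2}$ are distinct --- to obtain $|Q^{(m-2)}|\ge(m-1)!$; it then suffices to prove $[\,Q:Q^{(m-2)}\,]\ge m$, since that gives $|Q|\ge m\cdot(m-1)!=m!$. (At $m=4$ the distinctness hypothesis excludes the order-$6$ quotient, and the estimate gives $|Q|\ge24$.) Combined with the collapsing step this yields the theorem for $n\ge5$. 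The index estimate $[\,Q:Q^{(m-2)}\,]\ge m$ is the combinatorial heart of Kolay's argument, and the step I expect to be the main obstacle: it is precisely where Artin's appeal to Bertrand's postulate is eliminated. The natural approach is to exhibit $m$ distinct left cosets --- $Q^{(m-2)}$ together with $Q^{(m-2)}\,a_{m-1}a_{m-2}\cdots a_j$ for $1\le j\le m-1$, which in $\Sigma_m$ are exactly the cosets of $\Sigma_{m-1}$ --- and to show that a coincidence of two of these cosets, unwound using that $a_{m-1}$ commutes with $a_1,\dots,a_{m-3}$ and braids with $a_{m-2}$, either forces two of the $a_i$ to coincide (contradicting distinctness) or lands in the degenerate case $a_{m-1}\in Q^{(m-2)}$, which is dispatched separately (using that $\sigma_{m-1}$ is conjugate to $\sigma_1$). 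Making this unwinding airtight is the crux; the rest is bookkeeping. (By contrast, Caplinger's Theorem~\ref{thm:bound} applied to the totally symmetric set $\{\sigma_1,\sigma_3,\dots\}$ of size $\lfloor n/2\rfloor$ yields only $|Q|\ge(\lfloor n/2\rfloor+1)!$, far short of $n!$, so the finer filtration is genuinely needed.)

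\emph{Equality and uniqueness.} If $|Q|=n!$ then every intermediate index is forced to its minimum, so $|Q^{(n-2)}|=(n-1)!$ and $Q^{(n-2)}\cong\Sigma_{n-1}$ standardly by induction, while $[\,Q:Q^{(n-2)}\,]=n$, so $Q$ acts on the $n$ cosets above exactly as $\Sigma_n$ acts on $n$ points. Tracing this back identifies each $a_i$ with a conjugate of the transposition $(i\ \ i+1)$, so $f$ is conjugate to the standard map $\B_n\to\Sigma_n$; since the kernel of a minimal non-cyclic quotient is thereby determined, uniqueness follows. For $n=4$, minimality instead comes from relaxing distinctness, and the same coset analysis applied to $\B_4\to\B_3\to\Sigma_3$ gives uniqueness there.
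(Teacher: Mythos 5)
Your overall strategy---use a collapsing set to force the generator images to be distinct, then count---is the right one, but your proposal diverges from Kolay's argument at exactly the point you flag as the crux, and the gap there is real. You work with the $(n{-}1)$-element collapsing set $\{\sigma_1,\dots,\sigma_{n-1}\}$ and try to extract the factor of $n$ from an index estimate $[Q:Q^{(m-2)}]\geq m$, where $Q^{(m-2)}=\langle a_1,\dots,a_{m-2}\rangle$ is the image of a $\B_{m-1}$-subgroup. The paper instead uses a much larger collapsing set: the set $X_n$ of \emph{all} $\binom{n}{2}$ half-twists $\sigma_I$ indexed by unordered pairs $I\subset\{1,\dots,n\}$. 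That this is a strong collapsing set for $n\geq 5$ follows from $\normalclosure{\sigma_I\sigma_J^{-1}}=\B_n'$ for $I\neq J$, and since all $\sigma_I$ are conjugate to $\sigma_1$, the conjugacy orbit of $f(\sigma_1)$ in $G$ is immediately of size $\geq\binom{n}{2}$. The factor your index estimate is trying to produce is thus obtained for free from the orbit bound, and the complementary factor $2(n-2)!$ comes from a lower bound on the \emph{centralizer} of $f(\sigma_1)$, which contains the image of $\langle\sigma_1,\sigma_3,\dots,\sigma_{n-1}\rangle\cong\Z\times\B_{n-2}$. This is a genuinely different decomposition from yours: an orbit-stabilizer count rather than a coset filtration, and the induction steps down by $2$ (through $\B_{n-2}$) rather than by $1$, which is why the paper needs a separately-verified extension of the $\B_4$ base case to handle $n=6$.

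Two concrete gaps remain in your version. First, the index estimate $[Q:Q^{(m-2)}]\geq m$ is asserted, not proved; the coset-unwinding you sketch does not obviously close, since a coincidence $C_i=C_j$ among your $m$ candidate cosets must be shown to force either a collision among the $a_k$ or $a_{m-1}\in Q^{(m-2)}$, and this requires braid-relation calculations you have not carried out. Second, the degenerate case $a_{m-1}\in Q^{(m-2)}$ is waved away with a parenthetical, but this is precisely what the paper handles most carefully: if $f(\sigma_1)\in f(\B_{n-2})$, then $f(\sigma_1)$ is a nontrivial central element of $f(\B_{n-2})$, one argues that $f(\B_{n-2})/Z(f(\B_{n-2}))$ is a non-cyclic quotient of $\B_{n-2}$ (otherwise $f(\B_{n-2})$ would be abelian, hence cyclic, hence $f$ cyclic), and the needed factor of $2$ is recovered from $|f(\B_{n-2})|\geq 2\,|f(\B_{n-2})/Z(f(\B_{n-2}))|$. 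Without an analogous argument your degenerate case is unresolved. (Also, a small slip in your collapsing step: for $|i-j|\geq 2$ a third generator $\sigma_\ell$ commuting with both $\sigma_i$ and $\sigma_j$ need not exist---try $i=1$, $j=4$, $n=6$. The cleaner route is that $\sigma_i\sigma_j^{-1}$ is conjugate to $\sigma_1\sigma_3^{-1}$, whose normal closure is $\B_n'$ for $n\geq 5$.)
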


In 2019 the second author of this paper had asked: \emph{What is the smallest non-cyclic quotient of $\B_n$?  Is it $\Sigma_n$?}  Kolay's theorem answers this in the affirmative.

We give Kolay's stunningly simple proof below.  The main ingredients are (1) a large collapsing set in $\B_n$ and (2) the orbit-stabilizer theorem.  While Artin never defined collapsing sets, he certainly had all of the tools to prove Kolay's theorem.  It is remarkable that 74 years passed in between the two works.

Before Kolay's work, partial answers to the second author's question were given by Chudnovsky--Kordek--Li--Partin \cite{reu}, Caplinger--Kordek \cite{caplingerkordek}, and Scherich--Verberne \cite{sv}.

\subsection{Braid groups} In this section we explain Kolay's proof of Theorem~\ref{thm:kolay}, and in the next we explain how Kolay applied the same ideas to the case of the mapping class group.  

\p{Collapsing sets} Let $G$ be a group.  We say that a subset $X \subseteq G$ is a collapsing set if for every group homomorphism $f : G \to H$ the restriction $f|X$ is either injective or constant.  This notion is a generalization of totally symmetric sets.  Indeed, Lemma~\ref{cic} implies that every totally symmetric set is a collapsing set.  

We also remark that under any homomorphism, a collapsing set maps to a collapsing set.  Therefore, there is an analogous blueprint for collapsing sets, an idea that seems to be unexplored.  

\p{Strong collapsing sets} Let $G$ be a group and let $X = \{x_1,\dots,x_k\} \subseteq G$ be a subset.  We say that $X$ is a strong collapsing set if 
\[
G/\normalclosure{x_ix_j^{-1}}
\]
is abelian for all pairs $\{i,j\}$.  This is the same as saying that the normal closure of each $x_ix_j^{-1}$ contains the commutator subgroup $[G,G]$.  If the $x_i$ are all conjugate, the $x_ix_j^{-1}$ lie in $[G,G]$ and so each $G/\normalclosure{x_ix_j^{-1}}$ must exactly be the abelianization of $G$.  It follows from this that a strong collapsing set of conjugate elements is a collapsing set, since conjugate elements in an abelian group are equal.

\p{A large collapsing set} Similar to Step 1 of the totally symmetric set blueprint, we will make use of a large strong collapsing set in $\B_n$ for $n \geq 5$.

For each unordered pair $I = \{i,j\} \subseteq \{1,\dots,n\}$ let $\sigma_I$ denote the half-twist in $\B_n$ given by the counter-clockwise exchange of the $i$th and $j$th marked points in the upper half of $D_n$.  If $I$ and $J$ are distinct ordered pairs then $\sigma_I\sigma_J^{-1}$ is conjugate in $\B_n$ to exactly one of the following: $\sigma_1\sigma_2^{-1}$, $\sigma_1\sigma_3^{-1}$, or $b= \sigma_{\{1,3\}}\sigma_{\{2,4\}}^{-1}$.  

We claim that for $n \geq 5$ the normal closure of any of these three elements in $\B_n$ is the commutator subgroup $\B_n'$. For the first two elements, this is a standard fact.  Similarly, the commutator $[b,\sigma_4]$ both lies in the normal closure of $b$ and is conjugate to $\sigma_1\sigma_2^{-1}$.  It follows that the normal closure of $b$ is again $\B_n'$.  (An alternate, but equivalent, proof of the claim is given by the well-suited arc criterion of Lanier and the second author \cite[Lemma 6.2]{ckm}.)

It follows from the claim that the set of all $\sigma_I$ is a strong  collapsing set for $\B_n$.  We refer to this as the standard strong collapsing set for $\B_n$ and denote it $X_n$.

\p{Two basic group theory facts} In the proof of Theorem~\ref{thm:kolay} we will use the following fact:
\begin{quote}
\emph{(1) If $f : \Z \times H \to G$ is a group homomorphism, and $t$ denotes a generator of $\Z$, then $f(t) \notin f(H)$ if and only if
\[
|f(\Z \times H)| \geq 2 |f(H)|.
\]}
\end{quote}
This fact is true because both conditions are equivalent to the statement that $f(t)f(H)$ is a nontrivial coset of $f(H)$ in $f(\Z \times H)$.  
We will also use the following:
\begin{quote}
\emph{(2) If $G$ is a group, then $Z(G)$ is nontrivial if and only if
\[
|G| \geq 2|G/Z(G)|.
\]}
\end{quote}
This is true because if $Z(G)$ is nontrivial then each coset has at least two elements.  While both statements make sense for infinite groups, we will only apply them when $G$ is finite.

\p{Base cases} We can prove the $n=3$ and $n=4$ cases of Theorem~\ref{thm:kolay} by direct inspection.  Because the abelianization of $\B_n$ is cyclic, a non-cyclic quotient of $\B_n$ is non-abelian.  The only non-abelian group of order 6 or less is $\Sigma_3$.  Thus, all other finite non-abelian quotients of $\B_3$ and $\B_4$ have order strictly greater than 6.  

To see that the standard maps $\B_3 \to \Sigma_3$ and $\B_4 \to \Sigma_3$ are unique up to automorphisms of $\Sigma_3$, we simply check that (up to automorphisms of $\Sigma_3$) the only ordered pair of elements of $\Sigma_3$ satisfying the braid relation is $((1 \ 2),(2 \ 3))$.

\p{Extension of the $n=4$ case} We also will require the following statement:
\begin{quote}
If $f : \B_4 \to G$ is a quotient map that is injective on $X_4$ then $|G| \geq 4!$.  Further, if $|G| = 4!$ then $G = \Sigma_4$ and $f$ is standard.
\end{quote}
Since this can be easily proved with a computer, we omit the proof (although it is a fun exercise to do it by hand!).  

\begin{proof}[Kolay's proof]

We prove the theorem by induction on $n$, with the base cases $n=3$ and $n=4$ (and the extension of the latter) handled as above.  

Let $f : \B_n \to G$ be a non-cyclic quotient map.  Let $X_n$ be the standard strong collapsing set in $\B_n$.  The group $G=f(\B_n)$ acts by conjugation on the conjugacy class of $f(\sigma_1)$ in $G$. To prove that $|G| \geq n!$, we will apply the orbit-stabilizer theorem to this action.  A further analysis will give the statement that $f$ is conjugate to the standard map to $\Sigma_n$. 

For the orbit-stabilizer argument there are, naturally, two steps.  Specifically, if $\mathcal{O} \subseteq G$ and $\mathcal{S} \subseteq G$ are the orbit and stabilizer of $f(\sigma_1)$ then we will show that
\[
|\mathcal{O}| \geq {n \choose 2} \quad \text{and} \quad  |\mathcal{S}| \geq 2 \cdot (n-2)!.
\]

\medskip

\noindent \emph{Orbit.}  Since $X_n$ is a strong collapsing set, and since $G$ is not cyclic, it follows that $|f(X_n)|=|X_n| = {n \choose 2}$.  In particular $|\mathcal{O}| \geq {n \choose 2}$.

\medskip

\noindent \emph{Stabilizer.}  We consider the action of $\B_n$ on itself by conjugation.  The stabilizer of $\sigma_1$ in $\B_n$ contains a subgroup 
\[
\langle \sigma_1,\sigma_3,\sigma_4,\dots,\sigma_{n-1} \rangle \cong \Z \times \B_{n-2}.
\]
The image $f(\Z \times \B_{n-2})$ is a subgroup of $\mathcal{S}$.   We would like to bound the cardinality of this image from below.  We treat two cases, according to whether $f(\sigma_1)$ lies in $f(\B_{n-2})$.

By induction we may assume that $|f(\B_{n-2})| \geq (n-2)!$.  Indeed, if $f(B_{n-2})$ were cyclic, then $f$ would be cyclic, contrary to assumption.  We will use this assumption in both cases.

\bigskip

\noindent Case 1: $f(\sigma_1) \notin f(\B_{n-2})$.  By the first basic group theory fact above, we have
\[
|\mathcal{S}| \geq |f(\Z \times \B_{n-2})| \geq 2 |f(\B_{n-2})| \geq 2 (n-2)!
\]
as desired.

\bigskip

\noindent Case 2: $f(\sigma_1) \in f(\B_{n-2})$. In this case $f(\Z \times \B_{n-2}) = f(\B_{n-2})$.  Since $f$ is nontrivial, $f(\sigma_1)$ is nontrivial.  Since $\sigma_1$ lies in the centralizer of $\B_{n-2}$ it must be that $f(\sigma_1)$ lies in the center of $f(\B_{n-2})$.  In particular, $Z(f(\B_{n-2}))$ is nontrivial.  

We claim that $f(\B_{n-2})/Z(f(\B_{n-2}))$ is not cyclic.  Indeed, if it were cyclic then $f(\B_{n-2})$ would be abelian (for any group $G$, if $G/Z(G)$ is abelian then $G$ is).  The abelianization of $\B_{n-2}$ is cyclic, and so any abelian quotient of it is cyclic.  In particular, $f(\B_{n-2})$ is cyclic.  It then follows that $f(\B_n)$ is cyclic, contrary to assumption.

By the claim, the group $f(\B_{n-2})/Z(f(\B_{n-2}))$ is a non-cyclic quotient of $\B_{n-2}$.  By induction, its order is bounded below by $(n-2)!$.  By the second basic group theory fact, we have
\[
|\mathcal{S}| \geq |f(\Z \times \B_{n-2})| = |f(\B_{n-2})| \geq 2 |f(\B_{n-2})/Z(f(\B_{n-2}))| \geq 2 (n-2)!
\]

We may now complete the proof of the first statement.  By the orbit-stabilizer theorem, we have
\[
|G| \geq f(B_n) \geq |\mathcal{O}||\mathcal{S}| \geq {n \choose 2} 2 \cdot (n-2)! = \frac{n(n-1)}{2} \cdot 2 \cdot (n-2)! = n!
\]

\bigskip

\noindent \emph{The first statement, $n=6$ case.}  For $n=6$ the argument is the same, except we must use the extension of the base case $n=4$.  Since we may assume that $f$ is injective on $X_6$, it is injective on the copy of $X_4$ associated to $\B_4 \leqslant \B_6$.  Hence the size of the stabilizer $\mathcal{S}$ is bounded below by $2 \cdot 4!$.

\bigskip

\noindent \emph{The second statement.} To prove the stronger statement that any quotient of $\B_n$ with order $n!$ is the standard one, it suffices to show that the $f(\sigma_i)$ have order 2 (because the quotient $\B_n \to \B_n /\normalclosure{\sigma_1^2} \cong \Sigma_n$ is the standard quotient). But this is true because in order to realize the lower bound $|\mathcal{S}| \geq 2 \cdot (n-2)!$ it must be true by induction that $f(\B_{n-2})$ is the standard quotient.  
\end{proof}

\subsection{Mapping class groups} We now turn our attention to the analogue of Theorem~\ref{thm:kolay} for mapping class groups.  The natural action of $\Mod(S_g)$ on $H_1(S_g;\mathbb{F}_2)$ gives rise to a representation
\[
\Mod(S_g) \to \Sp_{2g}(\mathbb{F}_2).
\]
The order of the latter group is
\[
|\Sp_{2g}(\mathbb{F}_2)| = 2^{g^2} \prod_{i=1}^g (2^{2i}-1).
\]
Remarkably, we again have that the most natural small quotient is the smallest.  

\begin{theorem}[Kielak--Pierro]
\label{thm:kp}
Let $g \geq 1$.  Up to conjugacy, there is a unique minimal  non-cyclic quotient of $\Mod(S_g)$, namely, the standard map $\Mod(S_g) \to \Sp_{2g}(\mathbb{F}_2)$.
\end{theorem}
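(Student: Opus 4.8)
The plan is to run Kolay's argument for Theorem~\ref{thm:kolay} with $\B_n$ replaced by $\Mod(S_g)$: in place of the standard strong collapsing set of half-twists we use a strong collapsing set of Dehn twists about nonseparating curves, and in place of the orbit--stabilizer bookkeeping for $\Sigma_n$ we carry out the analogous bookkeeping for $\Sp_{2g}(\mathbb{F}_2)$. The induction is on $g$, driven by the numerical identity
\[
|\Sp_{2g}(\mathbb{F}_2)| \;=\; (2^{2g}-1)\cdot 2^{2g-1}\cdot |\Sp_{2(g-1)}(\mathbb{F}_2)|,
\]
so in the inductive step, given a non-cyclic quotient $f\colon \Mod(S_g)\to G$, we must produce for a suitable conjugation action of $G$ an orbit of size at least $2^{2g}-1$ and a stabilizer of size at least $2^{2g-1}\,|\Sp_{2(g-1)}(\mathbb{F}_2)|$. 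The base cases $g=1$ and $g=2$ are handled directly: $\Mod(S_1)\cong\mathrm{SL}_2(\Z)$ has cyclic abelianization, so a non-cyclic quotient is non-abelian of order at least $6$, and $\Sp_2(\mathbb{F}_2)\cong\Sigma_3$ is the unique non-abelian group of order $6$; the case $g=2$ follows from a direct analysis of the small non-abelian quotients of $\Mod(S_2)$. As in Kolay's proof one also needs versions of the statement for surfaces with boundary, since cutting along a curve does not yield a closed surface.

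For Step~1 of the blueprint I would take $Y_g=\{T_{c_1},\dots,T_{c_{2^{2g}-1}}\}$, where the $c_i$ are nonseparating simple closed curves whose $\mathbb{F}_2$-homology classes run over the $2^{2g}-1$ nonzero vectors of $H_1(S_g;\mathbb{F}_2)$, chosen so that the normal closure of $T_{c_i}T_{c_j}^{-1}$ is all of $\Mod(S_g)$ for every pair $i,j$. Since $\Mod(S_g)$ is perfect for $g\geq3$, this makes $Y_g$ a strong collapsing set in the sense of Section~\ref{sec:kolay}; the required normal-generation facts are the mapping class group analogue of the well-suited arc criterion, and verifying them for a configuration of curves realizing every nonzero $\mathbb{F}_2$-class is the first point needing care. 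Granting this, for any non-cyclic $f$ the set $f(Y_g)$ has exactly $2^{2g}-1$ elements, all conjugate in $G$, so the conjugacy class $\mathcal{O}$ of $f(T_{c_1})$ has $|\mathcal{O}|\geq 2^{2g}-1$.

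The stabilizer estimate is where the argument departs most from the braid group case, and I expect it to be the main obstacle. The centralizer of $T_{c_1}$ in $\Mod(S_g)$ is the stabilizer $\Stab(c_1)$ of the isotopy class of $c_1$, and $f(\Stab(c_1))$ lies in the centralizer $\mathcal{S}$ of $f(T_{c_1})$. Unlike the braid case, where the centralizer of $\sigma_1$ obviously contains $\Z\times\B_{n-2}$, the subgroup $\langle T_{c_1}\rangle\times\Mod(S_{g-1,1})$ of $\Stab(c_1)$ is \emph{not} large enough: we need $\mathcal{S}$ to have order at least $2^{2g-1}\,|\Sp_{2(g-1)}(\mathbb{F}_2)|$, which is the order of the maximal parabolic $\Stab_{\Sp_{2g}(\mathbb{F}_2)}([c_1])\cong\mathbb{F}_2^{2g-1}\rtimes\Sp_{2(g-1)}(\mathbb{F}_2)$ onto which $\Stab(c_1)$ surjects. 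The strategy is to filter $\Stab(c_1)$ compatibly with this parabolic: its image in a genus-$(g-1)$ bordered mapping class group (of the surface obtained by cutting along $c_1$) is bounded below by $|\Sp_{2(g-1)}(\mathbb{F}_2)|$ by the bordered inductive hypothesis, while the remaining twist-and-point-push directions are forced to enlarge the image by a further factor $2^{2g-1}$, extracted layer by layer using the two basic group-theory facts used in Kolay's proof about the index-$2$ behaviour of $\Z$-factors and of nontrivial centers (as there, but iterated). Orbit--stabilizer together with the numerical identity then gives $|G|\geq|\Sp_{2g}(\mathbb{F}_2)|$.

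Finally, for the uniqueness statement I would track equality through the estimate. If $|G|=|\Sp_{2g}(\mathbb{F}_2)|$, then $f(\Stab(c_1))$ realizes the minimal non-cyclic quotient of $\Stab(c_1)$, which by the bordered induction is the one coming from the $\mathbb{F}_2$-symplectic action; this forces $f(T_{c_1})$ to act as a transvection, so $f(T_{c_1})^2=1$. For $g\geq3$ the normal closure of the square of a nonseparating Dehn twist in $\Mod(S_g)$ is exactly $\ker\bigl(\Mod(S_g)\to\Sp_{2g}(\mathbb{F}_2)\bigr)$, so $f$ factors through $\Sp_{2g}(\mathbb{F}_2)$ and, by cardinality, equals the standard map post-composed with an isomorphism. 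The two genuine difficulties are thus (a) assembling the strong collapsing set of nonseparating twists with the prescribed $\mathbb{F}_2$-homology classes and full normal closures, and (b) the parabolic bookkeeping inside $\Stab(c_1)$ that produces the extra factor $2^{2g-1}$; the low-genus base cases are a mild additional nuisance.
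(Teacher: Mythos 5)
Your plan follows the right blueprint and correctly identifies the two hard points, but the second one (the stabilizer estimate) is a genuine gap, not merely a detail: you acknowledge it as ``the main obstacle'' and then wave at it with ``extracted layer by layer using the two basic group-theory facts.'' Concretely, your collapsing set $Y_g$ of nonseparating twists has only $2^{2g}-1$ elements, so the orbit bound you extract is $|\mathcal{O}|\geq 2^{2g}-1$, which forces you to show that the centralizer of $f(T_{c_1})$ has order at least $2^{2g-1}\,|\Sp_{2(g-1)}(\mathbb{F}_2)|$. The subgroup you can actually control by induction has image of order only $\geq |\Sp_{2(g-1)}(\mathbb{F}_2)|$, and there is no obvious analogue of the two elementary facts from Kolay's braid argument that would supply the extra factor $2^{2g-1}$: those facts produce a single factor of $2$ from a central $\mathbb{Z}$-direction or a nontrivial center, whereas you need to account for the full $\mathbb{F}_2^{2g-1}$ of twist-and-push directions inside the parabolic. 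The paper even contains a ``final lament'' stating that the authors looked for a proof of exactly this shape --- using $X_g$ directly rather than its derivative --- and could not find one.

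The paper's actual proof sidesteps the difficulty with a different key device: the \emph{derivative collapsing set}
\[
X_g' = \{(x,y)\in X_g\times X_g : x\text{ dual to }y\},\qquad |X_g'|=(2^{2g}-1)\cdot 2^{2g-1},
\]
on which $G$ acts by conjugation on ordered pairs. This moves the factor $2^{2g-1}$ into the orbit, so the stabilizer bound becomes exactly what induction delivers, with no parabolic bookkeeping at all. The paper also works throughout with the bordered group $\Mod(S_g^1)$ (reducing from $\Mod(S_g)$ via the filling map), uses the hyperelliptic involution and Arnol'd's isomorphism $H_1(S_g^1;\mathbb{F}_2)\cong H_1(D_{2g+1}^\circ;\mathbb{F}_2)^{\mathrm{even}}$ to build $X_g$ from curves $\tilde c_A$ indexed by nonempty even subsets $A\subseteq P$, and verifies the strong-collapsing property via a well-suited curve criterion by exhibiting, for each pair $A\neq B$, a curve $\tilde c$ with $i(\tilde c,\tilde c_A)=1$ and $i(\tilde c,\tilde c_B)=0$. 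Its base case is $g=1$ via Dehn's isomorphism $\Mod(S_1^1)\cong \B_3$ together with Theorem~\ref{thm:kolay}; no separate $g=2$ case is needed. You should either adopt the derivative collapsing set or supply the missing $2^{2g-1}$ from the stabilizer with an honest argument --- the latter would in fact improve on the published proof.
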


This theorem was conjectured by Zimmermann \cite{Zimmermann} in 2012.  Kielak--Pierro \cite{Kielak-Pierro} proved it in 2019, using the approach established by Baumeister--Kielak--Pierro \cite{Baumeister-Kielak-Pierro} in their work on the analogous problem about outer automorphisms of free groups.  

The Kielak--Pierro proof of Theorem~\ref{thm:kp} relies on the classification of finite simple groups and the representation theory of the mapping class group, as well as the deep work of Berrick--Gebhardt--Paris \cite{bgp}, which itself uses the Matsumoto presentation of the mapping class group.  It is astonishing that Kolay's argument for the braid group applies with little modification to prove the same theorem.  

Again, the keys to Kolay's proof of Theorem~\ref{thm:kp} are the construction of a large strong collapsing set in $\Mod(S_g)$, and an orbit-stabilizer argument.

The general outline is closely analogous to the argument for the braid group.  Even the description of the large collapsing set constructed in Step 1 is similar.  Several new tools are required.  We introduce these in turn as we go.  

\p{Reduction to the open case} Let $S_g^1$ denote the surface with one boundary component obtained from $S_g$ by removing the interior of an embedded disk.  Since $H_1(S_g^1;\mathbb{F}_2)$ is naturally isomorphic to $H_1(S_g;\mathbb{F}_2)$ we also have a natural map
\[
\Mod(S_g^1) \to \Sp_{2g}(\mathbb{F}_2).
\]
By filling the disk back in, we also obtain a quotient map
\[
\Mod(S_g^1) \to \Mod(S_g).
\]
In general if (within some class of groups) $G$ is the smallest quotient of a group $M_1$, and $M$ is a quotient of $M_1$ that also has $G$ as a quotient, then $G$ is the smallest quotient of $M$ (in that class of groups).  Thus to prove Theorem~\ref{thm:kp} it suffices to prove the analogous statement for $\Mod(S_g^1)$.

\p{The base case} The base case for the $\Mod(S_g^1)$-version of Theorem~\ref{thm:kp} is the case $g=1$.  In this case Dehn \cite[p. 172]{dehn} proved that $\Mod(S_g^1) \cong \B_3$.  By Theorem~\ref{thm:kolay}, the smallest non-cyclic quotient of this group is $\Sigma_3 \cong \Sp_2(\mathbb{F}_2)$, as desired.  (One way to prove the last isomorphism is to use the formula for the cardinality of $\Sp_2(\mathbb{F}_2)$ and apply Theorem~\ref{thm:kolay}!).  

\p{A well-suited curve criterion} We now turn our attention to Step 1.  In the braid group case, the construction of the large strong collapsing set used the well-known fact that for $i \neq j$ the quotient
\[
\B_n\,/\normalclosure{\sigma_i\sigma_j^{-1}}
\]
is cyclic.  To mimic this step, we will use the following fact:
\begin{quote}
\emph{Let $f \in \Mod(S_g^1)$ and suppose $c$ is a curve with $i(c,f(c))=1$.  Then the normal closure $\normalclosure{f}$ contains the commutator subgroup $\Mod(S_g^1)'$ and
\[
\Mod(S_g^1)/\normalclosure{f}
\]
is cyclic.}
\end{quote}
This fact is an instance of the well-suited curve criterion of Lanier and the second author \cite[Lemma 2.1]{Justin-Dan}.  While their argument is given explicitly for $\Mod(S_g)$, it applies verbatim for $\Mod(S_g^1)$.  The key points are that---like $\Mod(S_g)$---the abelianization of $\Mod(S_g^1)$ is cyclic for $g \geq 1$ and---like $\Mod(S_g)$--- the group $\Mod(S_g^1)$ has a generating set consisting of Dehn twists about curves that have pairwise intersection at most 1.  (In the construction of the large strong collapsing set for $\B_n$ we noted that we could have used the well-suited arc criterion in the proof.  Similarly, it is true here that we can give a proof that mimics the braid group case more closely.  We leave it to the reader to decide which approach they prefer.)

\p{The hyperelliptic involution and mod 2 homology} A hyperelliptic involution of $S_g^1$ is a homeomorphism $\iota$ of order two with $2g+1$ fixed points.  The quotient $S_g^1 / \langle \iota \rangle$ is $D_{2g+1}$, the disk with $2g+1$ marked points.  These marked points are the images of the fixed points of $\iota$.  We denote the set of marked points by $P$.

Let $D_{2g+1}^\circ$ denote the disk with $2g+1$ punctures obtained by removing $P$.  The homology group $H_1(D_{2g+1}^\circ) \cong (\mathbb{F}_2)^{2g+1}$ has a canonical generating set, namely, the classes represented by small loops around the punctures.  This gives rise to a canonical homomorphism
\[
H_1(D_{2g+1}^\circ) \to \mathbb{F}_2,
\]
whereby each of these generators maps to 1.  We denote the kernel of this map by $H_1(D_{2g+1}^\circ;\mathbb{F}_2)^{\text{even}}$.  The elements of this kernel are exactly the ones represented by simple closed curves in $D_{2g+1}$ surrounding an even number of marked points.

We will define a map 
\[
\Psi : H_1(S_g^1;\mathbb{F}_2) \to H_1(D_{2g+1}^\circ;\mathbb{F}_2)
\]
as follows.  Given $v \in H_1(S_g^1;\mathbb{F}_2)$, we may represent $v$ by a simple closed curve $c$ that avoids the fixed points of $\iota$.  The image of $c$ in $D_{2g+1}$ represents an element of $H_1(D_{2g+1}^\circ;\mathbb{F}_2)$.

Arnol'd \cite{arnold} gave the following (easy-to-prove but) remarkable fact:
\begin{quote}
\emph{
The map $\Psi$ is an isomorphism
\[
\Psi : H_1(S_g^1;\mathbb{F}_2) \stackrel{\cong}{\to} H_1(D_{2g+1}^\circ;\mathbb{F}_2)^{\text{even}}
\]
}
\end{quote}
The map $\Psi^{-1}$ can be described as follows.  Given an element $v$ of $H_1(D_{2g+1}^\circ;\mathbb{F}_2)^{\text{even}}$ we represent it by a simple closed curve $c$, and $\Psi^{-1}(v)$ is the class represented by one component of the preimage of $c$.  

Say that a subset of $P$ is even if it has an even number of elements.  By the above discussion we have natural bijections
\[
H_1(D_{2g+1}^\circ;\mathbb{F}_2)^{\text{even}} \leftrightarrow H_1(S_g^1;\mathbb{F}_2) \leftrightarrow \{\text{even subsets of } P \}
\]
Further, the nonzero elements of $H_1(S_g^1;\mathbb{F}_2)$ correspond to the nonempty even subsets of $P$.

\p{The large collapsing set} Let us represent $D_{2g+1}$ as a disk with the points of $P$ lying on a circle.  For each nonempty subset $A \subseteq P$ there is, up to isotopy, a unique curve $c_A$ in $D_{2g+1}$ that bounds a convex disk containing exactly the points of $P$ contained in $A$.  

If $A$ is even then the preimage of $c_A$ in $S_g^1$ has exactly two components.  We choose one of these (arbitrarily) and call it $\tilde c_A$.  

We will show that the set of Dehn twists
\[
X_g = \{ T_{\tilde c_A} \mid \ A \subseteq P \text{ even}, \ \ A \neq \emptyset\, \}
\]
is a strong collapsing set in $\Mod(S_g^1)$.   

In order to prove this, we take $A$ and $B$ to be distinct nonempty even subsets of $P$, and we assume that 
\[
f : \Mod(S_g^1) \to G
\]
is a homomorphism with 
\[
f(T_{\tilde c_A}) = f(T_{\tilde c_B}).
\]
For any choices of $A$ and $B$, there exists an arc $c$ in $D_{2g+1}$ that connects two marked points, that intersects $c_A$ in one point, and that is disjoint from $c_B$.  To check this, we consider two cases, according to whether or not $A \cup B$ is a proper subset of $P$ or not.  In the first case, let $p \in P \setminus (A \cup B)$; we take $c$ to connect $p$ to a point $q \in A$.  If $A \cup B = P$, then since $A$ and $B$ are even there is a point $p$ in $A \cap B$, and $c$ connects any such $p$ to a point $q \in A \setminus B$.  

The preimage of $c$ in $S_g^1$ is a simple closed curve $\tilde c$ with $i(\tilde c,\tilde c_A)=1$ and $i(\tilde c,\tilde c_B)=0$.  It follows that
\[
i(\tilde c,T_{\tilde c_A} T_{\tilde c_B}^{-1}(\tilde c)) = i(\tilde c,T_{\tilde c_A}(\tilde c)) = 1.
\]
By the above well-suited curve criterion, we conclude that $X_g$ is a strong collapsing set, as desired.

\p{A derivative collapsing set} Say that two elements of the strong collapsing set $X_g$ are dual if the corresponding curves have intersection number 1 (equivalently if they have algebraic intersection number 1).  We define 
\[
X_g^\prime = \{(x,y) \in X_g \times X_g \mid x \text{ is dual to } y\}.
\]
The set $X_g^\prime$ is a collapsing set in the following sense: if $f : \Mod(S_g^1) \to G$ is a non-cyclic homomorphism, then each $f(x,y)$ is an ordered pair of distinct elements, and if the $f$-image of any two elements of $X_g^\prime$ coincide, they all coincide.  Both of these statements follow from the fact that $X_g$ is a collapsing set.  

Since $X_g$ is in bijection with the nonzero elements of $H_1(S_g^1;\mathbb{F}_2)$, it has $2^{2g}-1$ elements.  Given one nonzero element of $H_1(S_g^1;\mathbb{F}_2)$, there is a codimension-1 affine subspace of $H_1(S_g^1;\mathbb{F}_2)$ corresponding to dual elements in $H_1(S_g^1;\mathbb{F}_2)$.  Thus we have
\[
|X_g^\prime| = (2^{2g}-1) \cdot 2^{2g-1}
\]
We are finally ready for the proof of the Kielak--Pierro theorem.

\p{Proof of Theorem~\ref{thm:kp}} We proceed by induction on $g$.  We already checked the base case $g=1$.  Assume then that the theorem holds for $\Mod(S_g^1)$ with $g \geq 1$.  We will show that it holds for $\Mod(S_{g+1}^1)$.  

Let $f : \Mod(S_{g+1}^1) \to G$ be a non-cyclic quotient.  We use essentially the same orbit-stabilizer argument that was used in the proof of Theorem~\ref{thm:kolay}.  The group $G$ acts by conjugation on the set of ordered pairs of elements of $G$.  Because $X_{g+1}^\prime$ is a collapsing set in the sense described above, the orbit is bounded below by
\[
|f(X_{g+1}^\prime)|=|X_{g+1}^\prime| = (2^{2(g+1)}-1) \cdot 2^{2(g+1)-1} = (2^{2(g+1)}-1) \cdot 2^{2g+1}
\]
The stabilizer contains a copy of the image of $\Mod(S_g^1)$.  By induction this gives a lower bound of
\[
|f(\Mod(S_g^1)| \geq 2^{g^2} \prod_{i=1}^g (2^{2i}-1).
\]
Multiplying these together gives the desired bound
\[
2^{(g+1)^2} \prod_{i=1}^{g+1} (2^{2i}-1).
\]
For this bound to be realized, the elements of $X_g$ must map to elements of order 2.  From there it follows that the quotient is $\Sp_{2(g+1)}(\mathbb{F}_2)$.  This completes the proof.

\p{A final lament} Kolay's proof of Theorem~\ref{thm:kp} goes through the derived collapsing set $X_g'$.  An analogous argument can be used to give an unnecessarily complicated proof of Theorem~\ref{thm:kolay} (about braid groups).  The situation suggests to the authors that there should be a proof of Theorem~\ref{thm:kp} that uses $X_g$ directly, and decreases genus in two inductive steps.  We were not able to find such a proof.  We implore the reader to find one.


\section{Speculations and representations}
\label{sec:spec}

As suggested to us by Kordek, there is a strong analogy between the collision-implies-collapse property and Schur's lemma from representation theory.  We can give weight to this analogy as follows.  

A homomorphism $f : G \to H$ induces a linear map $f_\ast : \C[G] \to \C[H]$.  The vector spaces $\C[G]$ and $\C[H]$ come equipped with a $G$-action and an $H$-action, respectively, where both groups act by conjugation on the basis elements.  If $X \subseteq G$ is a totally symmetric set with $|X|=k$, and $G_X$ is the stabilizer of $X$ in $G$, then $\C[X]$ is a representation of $G_X$.  By total symmetry $G_X$ surjects onto $\Sigma_k$, and so $\C[X]$ is a representation of $G_X$; in fact this representation factors through the permutation representation of $\Sigma_k$ on $\C[X]$.  On the other hand, the vectors space $\C[f(X)]$ is a representation of $f(G_X) \subseteq H_{f(X)}$.  Lemma~\ref{cic} implies that the latter representation has either the same dimension as $\C[X]$ or it has dimension 1.  This statement can be derived from Schur's lemma using the following three facts: (1) the first representation factors through $\Sigma_k$, (2) $f_\ast$ intertwines the two representations, and (3) the permutation representation of $\Sigma_k$ is the direct product of two irreducible representations of $\Sigma_k$, namely, the standard representation and the trivial one.

\p{Other groups} Because total symmetry can be understood within representation theory as above, we are led to speculate on which aspects of representation theory can be brought to bear in the theory of totally symmetric sets.  To begin, we know that, while the  representation theory of the symmetric group is rich in and of itself, there is a broad landscape of representations of various groups.

\begin{question}
To what extent, and to what end, can the theory of totally symmetric sets be generalized to arbitrary groups besides $\Sigma_k$?  \end{question}
  
 As an example of what we have in mind, we note that there is no lift of the totally symmetric set $\{(1\ i)\} \subseteq \Sigma_n$ to a totally symmetric set in $\B_n$.  On the other hand, there is a lift to a cyclically symmetric set, that is, a set with an action of $\Z/(n-1)$.  Similarly, there are large sets of Dehn twists in $\Mod(S_g)$ that carry an action by the dihedral group $D_{2g}$.  How can these sets be used in the classification of homomorphisms between braid groups and mapping class groups?
 
 \p{Extending the analogy to representation theory} Because of the connection between totally symmetric sets and representation theory described above, it is natural to ask which notions from representation theory have analogues for totally symmetric sets.  
 
\begin{question}
Which of the concepts in representation theory---direct sum, direct product, tensor product, etc.---have analogues in the theory of totally symmetric sets?  
\end{question}
 
Already in their work, Salter and the first author give versions of subrepresentations and induced representations for totally symmetric sets \cite{Caplinger-Salter}.  
 
\p{Multiple totally symmetric sets} The arguments presented in this paper are carried out by analyzing the action of a homomorphism on a single totally symmetric set or collapsing set. But many groups, such as the braid group, contain totally symmetric sets that are compatible in some sense (for instance, elements either commute or braid).  
 
\begin{question}
How can multiple totally symmetric sets in a group be used to give stronger constraints on homomorphisms than can be obtained with a single totally symmetric sets?
\end{question}
 
One step in this direction is taken in the work of Scherich--Verberne, where they study homomorphisms of virtual, welded, and classical braid groups by considering multiple totally symmetric sets at once \cite{sv}.  
 
\p{Bounds on representations} As we have seen, the fact that $\Sigma_n$ has a totally symmetric set of cardinality $n-1$ can be used, along with the work of the first author and Salter, to give a lower bound on the dimension of a non-cyclic linear representation of $\Sigma_n$.  We are curious to what extent this line of reasoning holds for other groups.  

As a first test case, we might consider the monster group $M$.  The smallest nontrivial representation of $M$ has dimension $47 \cdot 59 \cdot 71 = 196,883$.  Based on the case of the symmetric group, one might hope that this is because $M$ contains a totally symmetric set of cardinality 196,883.  But we already showed in Section~\ref{sec:tsc} that $M$ cannot contain a totally symmetric set whose cardinality is greater than~43.  

\begin{question}
What are the largest totally symmetric sets of the monster group? Can they give insight into the 196883-dimensional representation?
\end{question}

It is tantalizing that totally symmetric sets might give new insights into the notoriously mysterious monster group.  And similarly for other groups, discovered and not.

\bibliographystyle{amsplain}
\bibliography{bib}

\end{document}